\numberwithin{equation}{section}
\newtheorem{theorem}{Theorem}[section]
\newtheorem{conjecture}[theorem]{Conjecture}
\newtheorem{lemma}[theorem]{Lemma}
\newtheorem{proposition}[theorem]{Proposition}
\newtheorem{corollary}[theorem]{Corollary}
\theoremstyle{remark}
\newtheorem{definition}[theorem]{Definition}
\newtheorem{remark}[theorem]{Remark}
\newcommand\co{\colon\,}
\newcommand\vol{\text{\textup{vol}}}
\newcommand\dom{\operatorname{dom}}
\newcommand\lp{\textup{(}}
\newcommand\rp{\textup{)}}
\newcommand\wt{\widetilde}
\newcommand\bZ{\mathbb Z}
\newcommand\bR{\mathbb R}
\newcommand\bN{\mathbb N}
\newcommand\bQ{\mathbb Q}
\newcommand\bT{\mathbb T}
\newcommand\bC{\mathbb C}
\newcommand\cL{\mathcal L}
\newcommand\cA{\mathcal B}
\newcommand\cS{\mathcal S}
\newcommand\bb{$\bullet$}
\newcommand\Ca{$C^*$-algebra}
\renewcommand\Im{\operatorname{Im}}
\newcommand{\dbar}{\overline{\partial}}
\newcommand{\nbar}{\overline{\nabla}}
\newcommand{\ira}{irrational rotation algebra}
\newcommand{\EL}{Eu\-ler-La\-grange}
\begin{document}
\title{Noncommutative variations on Laplace's equation}
\subjclass[2000]{Primary 58B34; Secondary 58J05 35J05 35J20 30D30 46L87}
\begin{abstract}
As a first step at developing a theory of noncommutative
nonlinear elliptic partial differential equations, we analyze
noncommutative analogues of Laplace's equation and its
variants (some of the them nonlinear) over noncommutative
tori. Along the way we prove noncommutative analogues of
many results in classical analysis, such as Wiener's Theorem
on functions with absolutely convergent Fourier series,
and standard existence and non-existence theorems on elliptic functions.
We show that many many classical methods, including the Maximum
Principle, the direct method of the calculus of variations, 
the use of the Leray-Schauder Theorem, etc.,
have analogues in the noncommutative setting.
\end{abstract}
\author{Jonathan Rosenberg}
\thanks{Partially supported by NSF Grants DMS-0504212 and
DMS-0805003. This paper 
grew out of work on the paper \cite{Noncommsigma}. Some
of the results of this paper were presented in the
Special Session on E-Theory, Extensions, and Elliptic Operators at
the Joint Mathematics Meetings, San Diego, California, January 9, 2008.}
\address{Department of Mathematics\\
University of Maryland\\
College Park, MD 20742, USA}
\email{jmr@math.umd.edu}
\urladdr{http://www.math.umd.edu/\raisebox{-.6ex}{\symbol{"7E}}jmr}
\maketitle

\section{Introduction}
\label{sec:intro}

Gelfand's Theorem shows that $X\rightsquigarrow 
C_0(X)$ sets a contravariant equivalence of categories from the
category of locally compact [Hausdorff] spaces and proper maps to
the category of commutative {\Ca}s and $*$-homomorphisms. This
observation is the key to the whole subject of noncommutative geometry,
which is based on the following dictionary:

\medskip
\begin{center}\begin{tabular}{||cc|c||} 
\hline
&{\textbf{Classical}}& \textbf{Noncommutative}\\
\hline
\bb&locally compact space & {\Ca}\\
\bb&compact space & unital  {\Ca}\\
\bb&vector bundle & f.~g.\ projective module\\
\raisebox{-5pt}{\bb}&\raisebox{-5pt}{smooth manifold} & {\Ca} with\\
&& ``smooth subalgebra''\\
\bb&real-valued function & self-adjoint element\\
\bb&partial derivative & unbounded derivation\\
\bb&integral & tracial state\\
\hline
\end{tabular}\end{center}
\medskip

The object of this paper is to begin to use this dictionary to set up
a noncommutative theory of elliptic partial differential equations,
both linear and nonlinear, along with corresponding aspects
of the calculus of variations. Since the theory is still in its infancy,
we begin with the very simplest case: Laplace's equation and PDEs
closely connected to it, and concentrate 
on the simplest nontrivial example of a
noncommutative manifold, the irrational rotation algebra
(or noncommutative $2$-torus) $A_\theta$, $\theta\in\bR\smallsetminus
\bQ$. A definition of elliptic partial differential operators,
along with the study of one example associated with the
irrational rotation algebra, was given in Connes' fundamental
paper \cite{MR572645}, but there the emphasis was on pseudodifferential
calculus and index theory.  Here we focus on several other things:
variational methods, the Maximum Principle, an analogue of
Wiener's Theorem, tools for treating \emph{nonlinear} equations,
the beginnings of a theory of harmonic unitaries, and some aspects of
noncommutative complex analysis.

What is the motivation for a noncommutative theory of elliptic PDE?
For the most part, it comes from physics. Many of the
classical elliptic PDEs arise from variational problems in
Riemannian geometry, and are also the field equations of physical
theories. 
But the \emph{uncertainty principle} forces quantum
observables to be noncommutative. 
There is also increasing evidence (e.g.,
\cite{MR1128127,MR1463819,MR1613978,MR1720697,MR2116734,MR2222224})
that quantum field theories should allow for the possibility of
noncommutative space-times.
\emph{Noncommutative sigma-models}, for which the very earliest
and simplest investigations are in \cite{MR1818819,MR2035840}, 
will require the noncommutative harmonic map equation, which generalizes
the Laplace equation studied in this paper.

We use as our starting point the ``noncommutative differential geometry''
of Alain Connes \cite{MR572645}. This theory only works well with
``highly symmetric'' noncommutative spaces, as the ``smooth'' elements
are taken to be the $C^\infty$ vectors for an action of a Lie group
on a {\Ca}, but this theory is well adapted to the case of the
{\ira}, which carries an ergodic ``gauge action'' of the $2$-torus
$\bT^2$.

The outline of this paper is as follows. We begin in Section
\ref{sec:lin} with the basic properties of the Laplacian on
$A_\theta$. Included are analogues of Wiener's Theorem (Theorem
\ref{thm:Wiener}) and the Maximum Principle (Proposition
\ref{prop:MaxPrin}). In Section \ref{sec:Sobolev}, we take up the basic
properties of Sobolev spaces on $A_\theta$, which are needed for a
deeper analysis of some aspects of noncommutative PDEs. We should
point out that some of the material of this section has already
appeared in \cite[\S3]{MR2247860} and in \cite{MR2277208}. 
The heart of this paper is
contained in Sections \ref{sec:nonlin} and \ref{sec:unitaries}, which
begin to develop a theory 
of \emph{nonlinear} elliptic partial differential equations, using
methods analogous to those traditional in the theory of nonlinear
elliptic PDE. Finally, Section \ref{sec:holo} deals with
noncommutative complex analysis.

We should mention that another example of noncommutative elliptic PDE
and an associated variational problem on noncommutative tori, namely,
noncommutative  Yang-Mills theory, has already been studied by Connes
and Rieffel \cite{MR878383,MR1037414}. Furthermore, Theorem
\ref{thm:Wiener} was previously proved by Gr{\"o}chenig
and Leinert \cite{MR2015328} by another method, and variations on the
Gr{\"o}chenig-Leinert work can be found in \cite{MR2277208}. In their
paper, Gr{\"o}chenig 
and Leinert point out some applications to harmonic analysis and
wavelet theory, which go
off in a somewhat different direction than the applications to
mathematical physics which we envisage, though obviously there is some
overlap between the two.

I would like the thank the referee for several useful comments and
especially for the reference to \cite{MR2015328}. I would also
like to thank Hanfeng Li for pointing out an error in the original
proof of Theorem \ref{thm:Wiener}.

\section{The linear Laplacian}
\label{sec:lin}

We will be studying the {\Ca}
$A_\theta$ generated by two unitaries $U$, $V$ satisfying $UV=e^{2\pi
  i\theta}VU$. $A_\theta$ is simple with unique trace $\tau$
if $\theta\in \bR\smallsetminus\bQ$. (See for example \cite{MR623572} for
a review of the basic facts about $A_\theta$.) The torus $G=\bT^2$ acts by 
\[\alpha_{(z_1,z_2)} U = z_1U,\quad \alpha_{(z_1,z_2)} V = z_2V,
\quad |z_1|=|z_2|=1.
\]
The space of $C^\infty$ vectors for the action $\alpha$ is the ``smooth
{\ira}''
\[
A_\theta^\infty = \left\{ \sum_{m,n} c_{m,n}U^mV^n :
c_{m,n} \text{ rapidly decreasing}\right\}.
\]
This should be viewed as a noncommutative deformation of the
algebra $C^\infty(\bT^2)$ of smooth functions on an ordinary
$2$-torus, and the decomposition of an element of this algebra
in terms of multiples of $U^mV^n$ should be viewed as a sort
of noncommutative Fourier series decomposition, with $c_{m,n}$ as
a sort of Fourier coefficient.  For $a\in A_\theta$ but not
necessarily in $A_\theta^\infty$, the Fourier coefficients
$c_{m,n}$ are well defined and satisfy $|c_{m,n}|\le \Vert a\Vert$, since
$c_{m,n} = \tau(V^{-n}U^{-m}a)$, but the Fourier series expansion
of $a$ is only a formal expansion, and need not converge in the
topology of $A_\theta$, just as one has functions in
$C(\bT^2)$ whose Fourier series do not converge absolutely or
even pointwise.

We denote by $\delta_1$ and $\delta_2$ the infinitesimal generators
of the actions of the two $\bT$ factors in $\bT^2$ under $\alpha$. These are
unbounded derivations on $A_\theta$, and map $A_\theta^\infty$ to
itself. They are given by
\[
\delta_1(U) = 2\pi i U, \quad \delta_2(V) = 2\pi i V, \quad
\delta_2(U) = \delta_1(V) = 0.
\]
These derivations $\delta_j$ obviously commute with the adjoint
operation $*$, and play the roles of the partial derivatives
$\partial/\partial x_j$ in classical analysis on the $2$-torus.
Since the action $\alpha$ of $\bT^2$ preserves the tracial state $\tau$,
$\tau\circ \delta_j = 0$, $j=1,\,2$. This fact is the basis for the
following Lemma, which we will use many times in the future.
\begin{lemma}[``Integration by Parts'']
\label{lem:intbyparts}
If $a,\,b\in A^\infty_\theta$, then $\tau(\delta_j(a)b) = -
\tau(\delta_j(b)a)$, $j=1,\,2$.
\end{lemma}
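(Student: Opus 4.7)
The plan is to combine three facts already laid out in the excerpt: (i) $\delta_j$ is a derivation on $A_\theta^\infty$, so it satisfies the Leibniz rule $\delta_j(ab) = \delta_j(a)b + a\delta_j(b)$; (ii) $\tau \circ \delta_j = 0$, which is stated just above the lemma as a consequence of the fact that $\alpha$ preserves $\tau$; and (iii) $\tau$ is a trace, so $\tau(xy) = \tau(yx)$.

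First I would note that for $a, b \in A_\theta^\infty$ the product $ab$ again lies in $A_\theta^\infty$ (the smooth vectors form a subalgebra), so $\delta_j(ab)$ is defined and $\tau(\delta_j(ab)) = 0$. Then I would apply the Leibniz rule to expand $\delta_j(ab)$ and take the trace, obtaining
\[
0 = \tau(\delta_j(ab)) = \tau(\delta_j(a)b) + \tau(a\delta_j(b)).
\]
Finally, I would use the trace property to rewrite $\tau(a\delta_j(b)) = \tau(\delta_j(b)a)$ and rearrange to get the claimed identity.

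There is essentially no obstacle here; the lemma is a formal consequence of the derivation property combined with $\tau \circ \delta_j = 0$ and the trace identity. The only thing worth checking carefully is that $A_\theta^\infty$ is closed under multiplication so that all the quantities make sense, but this is immediate from the rapid-decrease characterization given earlier (convolution of rapidly decreasing sequences is rapidly decreasing).
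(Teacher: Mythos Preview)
Your proof is correct and is essentially identical to the paper's own argument: apply $\tau\circ\delta_j=0$ to $ab$, expand via the Leibniz rule, and then use the trace property to swap the factors. The paper's version is slightly terser (it writes only the displayed line $0=\tau(\delta_j(ab))=\tau(\delta_j(a)b)+\tau(a\delta_j(b))$ and says ``the result follows''), but the content is the same.
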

\begin{proof}
We have
\[
0 = \tau(\delta_j(ab)) = \tau(\delta_j(a) b) + \tau(a \delta_j(b)).
\]
The result follows.
\end{proof}

\begin{definition}
\label{def:Laplacian}
In analogy with the usual notation in analysis, we let
\[
\Delta = \delta_1^2 + \delta_2^2.
\]
This should be viewed as a ``noncommutative elliptic partial
differential operator.''  (The notion of ellipticity was defined
rigorously in \cite[p.\ 602]{MR572645}.)
Clearly, $\Delta$ is a ``negative'' operator, and its spectrum
consists of the numbers $-4\pi^2 (m^2 + n^2)$, $m,\,n\in \bZ$,
with eigenfunctions $U^mV^n$. Via the noncommutative Fourier
expansion discussed earlier, the pair $(A_\theta^\infty, \Delta)$
is isomorphic to $C^\infty(\bT^2)$ with the usual Laplacian $\Delta$,
provided one looks just at the linear structure and forgets the
noncommutativity of the multiplication. (This was already observed in
\cite[p.\ 602]{MR572645}.) 
\end{definition} 
\begin{proposition}
\label{prop:invLap}
For any $\lambda>0$ {\lp}or not of the form $-4\pi^2 n$ with
$n\in\bN${\rp}, $-\Delta +\lambda\co A_\theta^\infty \to
A_\theta^\infty$ is bijective.
\end{proposition}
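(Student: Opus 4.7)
The plan is to diagonalize $-\Delta+\lambda$ using the noncommutative Fourier decomposition. Each $a\in A_\theta^\infty$ is written uniquely as $a=\sum_{m,n\in\bZ}c_{m,n}U^mV^n$ with $(c_{m,n})$ rapidly decreasing, and since $\delta_1(U^mV^n)=2\pi i m\,U^mV^n$ and $\delta_2(U^mV^n)=2\pi i n\,U^mV^n$, the operator $-\Delta+\lambda$ acts on the monomial $U^mV^n$ as multiplication by the scalar
\[
\mu_{m,n}\;:=\;4\pi^2(m^2+n^2)+\lambda.
\]
Under the hypothesis on $\lambda$, none of the $\mu_{m,n}$ vanishes: if $\lambda>0$ this is obvious, and if $\lambda$ avoids every value $-4\pi^2 k$ with $k\in\bN$ then in particular it avoids $-4\pi^2(m^2+n^2)$ for all $m,n$.

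Injectivity then follows immediately. If $(-\Delta+\lambda)a=0$, then $\sum_{m,n}\mu_{m,n}c_{m,n}U^mV^n=0$, and, since the Fourier coefficients are unique, $\mu_{m,n}c_{m,n}=0$ for every $(m,n)$; as $\mu_{m,n}\ne 0$ we conclude that $a=0$.

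For surjectivity, given $b=\sum_{m,n}d_{m,n}U^mV^n\in A_\theta^\infty$, the only conceivable solution is
\[
a\;:=\;\sum_{m,n}\frac{d_{m,n}}{\mu_{m,n}}\,U^mV^n,
\]
so the task reduces to verifying that the coefficient sequence $c_{m,n}:=d_{m,n}/\mu_{m,n}$ is again rapidly decreasing; once this is established, $a\in A_\theta^\infty$ and a direct computation shows $(-\Delta+\lambda)a=b$. Since $|\mu_{m,n}|\to\infty$ as $m^2+n^2\to\infty$, the values $|\mu_{m,n}|$ have a positive infimum attained on a finite set, so division by $\mu_{m,n}$ is a bounded operation on coefficient sequences and in fact improves polynomial decay rather than damaging it; hence $(c_{m,n})$ is rapidly decreasing.

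There is no serious obstacle here: the whole content of the proposition is that the Schwartz-type topology on $A_\theta^\infty$ is preserved when one divides Fourier coefficients by the symbol of $-\Delta+\lambda$, which is an easy consequence of the symbol being bounded away from zero and growing polynomially.
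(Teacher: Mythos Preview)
Your proof is correct and follows essentially the same route as the paper: diagonalize $-\Delta+\lambda$ via the Fourier expansion, observe that the symbol $\mu_{m,n}=4\pi^2(m^2+n^2)+\lambda$ never vanishes under the hypothesis, and check that dividing by it preserves rapid decrease of the coefficients. If anything, you give slightly more justification than the paper does for the last step.
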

\begin{proof}
We have 
\[
(-\Delta+\lambda)\left(\sum_{m,n} c_{m,n}U^mV^n\right) = \sum_{m,n}
\bigl(4\pi^2(m^2+n^2)+\lambda\bigr) \,c_{m,n}U^mV^n\,.
\]
It is immediate that $-\Delta +\lambda$ has no kernel and has an
inverse given by the formula
\[
\sum_{m,n} c_{m,n}U^mV^n \mapsto \sum_{m,n} 
\frac{1}{4\pi^2(m^2+n^2)+\lambda}\,c_{m,n}U^mV^n\,,
\]
since if $c_{m,n}$ is rapidly decreasing, so are the coefficients
on the right.
\end{proof}
It is also easy to characterize the image of $\Delta$.
\begin{proposition}
\label{prop:imLap}
The image of $\Delta\co A_\theta^\infty \to A_\theta^\infty$ is
precisely $A_\theta^\infty \cap \ker \tau$, the smooth elements with
zero trace.
\end{proposition}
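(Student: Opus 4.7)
The approach is to handle the two inclusions separately, with the forward inclusion being essentially immediate from integration by parts and the reverse inclusion being a Fourier-series construction analogous to the one used in the proof of Proposition \ref{prop:invLap}.

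For the inclusion $\Delta(A_\theta^\infty) \subseteq A_\theta^\infty \cap \ker\tau$, first note that $\Delta$ manifestly maps $A_\theta^\infty$ into itself since each $\delta_j$ does. To see that every element of the image is traceless, apply Lemma \ref{lem:intbyparts} with $b=1$ (so that $\delta_j(1)=0$): for any $a\in A_\theta^\infty$, $\tau(\delta_j^2(a)) = \tau(\delta_j(\delta_j(a))\cdot 1) = -\tau(\delta_j(a)\cdot \delta_j(1)) = 0$. Summing over $j$ gives $\tau(\Delta a)=0$.

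For the reverse inclusion, let $b\in A_\theta^\infty\cap \ker\tau$ and write its noncommutative Fourier expansion $b = \sum_{m,n} c_{m,n} U^m V^n$, where the coefficients $c_{m,n}=\tau(V^{-n}U^{-m}b)$ are rapidly decreasing. Because $\tau(U^mV^n)=0$ for $(m,n)\ne(0,0)$, the hypothesis $\tau(b)=0$ forces $c_{0,0}=0$. Mimicking the construction from Proposition \ref{prop:invLap}, define
\[
a \;=\; \sum_{(m,n)\neq (0,0)} \frac{-1}{4\pi^2(m^2+n^2)}\, c_{m,n}\, U^m V^n.
\]
Division by the polynomial $4\pi^2(m^2+n^2)$ (which is bounded below by a positive constant on $\bZ^2\setminus\{0\}$) preserves rapid decrease of the coefficients, so $a\in A_\theta^\infty$, and a termwise application of $\Delta$, using $\Delta(U^mV^n) = -4\pi^2(m^2+n^2)U^mV^n$, shows $\Delta a = b$.

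I do not anticipate any serious obstacle: the only subtlety is making sure that the Fourier-series manipulations are legitimate, but this is already implicit in the proof of Proposition \ref{prop:invLap}, where rapidly decreasing sequences remain rapidly decreasing after division by a polynomial growing at infinity. The removal of the $(0,0)$ mode, made possible precisely by the trace-zero condition, is the entire content of the reverse inclusion.
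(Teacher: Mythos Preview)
Your proof is correct and essentially the same as the paper's. The only minor difference is in the forward inclusion: the paper reads off $\tau(\Delta a)=0$ directly from the Fourier formula $\Delta\bigl(\sum c_{m,n}U^mV^n\bigr)=-4\pi^2\sum(m^2+n^2)c_{m,n}U^mV^n$ (the factor $m^2+n^2$ kills the constant term), whereas you invoke Lemma~\ref{lem:intbyparts} (equivalently $\tau\circ\delta_j=0$); the reverse inclusion is identical in both proofs.
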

\begin{proof}
We have $\Delta(\sum_{m,n} c_{m,n}U^mV^n) = -4\pi^2\sum_{m,n}
(m^2+n^2) c_{m,n}U^mV^n$, and the factor $(m^2+n^2)$ kills the term
with $m=n=0$. Thus the image of $\Delta$ is contained in
the kernel of $\tau$. Conversely, suppose $a=\sum_{m,n} d_{m,n}U^mV^n$ is an
arbitrary element of $A_\theta^\infty \cap \ker \tau$. That means
$d_{m,n}$ is rapidly decreasing and $d_{0,0}=0$. Then
$d_{m,n}/(m^2+n^2)$ is also rapidly decreasing, and
\[
\sum_{m,n}{}^\prime \frac{- d_{m,n}}{4\pi^2\,(m^2+n^2)} U^mV^n,
\]
where the $'$ indicates we omit the term with $m=n=0$,
converges to an element $b$ of $A_\theta^\infty$ with $\Delta b = a$.
\end{proof}
The following consequence is an analogue of a well-known fact about
subharmonic functions on compact manifolds.
\begin{corollary}
\label{cor:nosubh}
If $a\in A_\theta^\infty$ is subharmonic {\lp}i.e., if $\Delta a\ge
0${\rp}, then $a$ is constant.
\end{corollary}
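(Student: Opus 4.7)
The plan is to reduce subharmonicity to harmonicity via the trace, and then read off from the Fourier expansion that a harmonic element must be a scalar. The key observation is that positivity plus zero trace forces an element to be zero, because $\tau$ is faithful on $A_\theta$ for $\theta\in\bR\smallsetminus\bQ$.

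First, I would apply Proposition~\ref{prop:imLap}: since $\Delta a$ lies in the image of $\Delta$, we have $\tau(\Delta a) = 0$. (Alternatively, this follows directly from Lemma~\ref{lem:intbyparts} applied with $b = 1$, or from the explicit Fourier formula for $\Delta$ given in the proof of Proposition~\ref{prop:imLap}, since the factor $m^2+n^2$ kills the constant term.) Now $\Delta a$ is assumed to be a positive element of $A_\theta$, and $\tau$ is a faithful tracial state on $A_\theta$. A positive element with zero trace under a faithful trace must vanish, so $\Delta a = 0$.

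Next, I would use the noncommutative Fourier expansion to show that $\ker\Delta\cap A_\theta^\infty$ consists only of scalar multiples of the identity. Writing $a = \sum_{m,n} c_{m,n} U^m V^n$, the computation in the proof of Proposition~\ref{prop:invLap} (with $\lambda = 0$) gives
\[
\Delta a = -4\pi^2 \sum_{m,n} (m^2+n^2)\, c_{m,n} U^m V^n.
\]
Since the $U^m V^n$ are linearly independent (their Fourier coefficients are computed by $c_{m,n} = \tau(V^{-n}U^{-m} a)$), the vanishing of $\Delta a$ forces $c_{m,n} = 0$ whenever $(m,n)\ne (0,0)$. Hence $a = c_{0,0}\cdot 1$ is constant, as claimed.

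The only real subtlety is the faithfulness of $\tau$, which is standard for the simple {\Ca} $A_\theta$ at irrational $\theta$ and is implicit in the ``basic facts'' cited from \cite{MR623572}; everything else is a direct application of the two preceding propositions. I do not anticipate any substantial obstacle.
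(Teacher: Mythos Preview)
Your proof is correct and follows essentially the same route as the paper's own argument: use Proposition~\ref{prop:imLap} to get $\tau(\Delta a)=0$, invoke faithfulness of $\tau$ to conclude $\Delta a=0$, and then observe that $\ker\Delta$ consists of scalars. You simply spell out the last step via the Fourier expansion, whereas the paper just asserts it.
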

\begin{proof}
Suppose $a\in A_\theta^\infty$ and $\Delta a\ge 0$. By Proposition
\ref{prop:imLap}, $\tau(\Delta a)=0$. But $\tau$ is a faithful trace,
which means that if $b\ge 0$ and $\tau(b)=0$, then $b=0$. Apply this
with $b=\Delta a$ and we see that $\Delta a=0$. This implies $a$ is a
scalar multiple of $1$.
\end{proof}

For future use, we are also going to want to study other ``function
spaces'' on the noncommutative torus. For example, we have the analogue
of the Fourier algebra of functions with absolutely convergent Fourier
series.
\begin{definition}
\label{def:Fourieralg}
Fix $\theta\in\bR\smallsetminus\bQ$, and let
\[
\cA_\theta = \left\{ \sum_{m,n} c_{m,n}U^mV^n :
\sum_{m,n} \left\vert c_{m,n} \right\vert < \infty\right\}.
\]
This is obviously a Banach subspace of $A_\theta$ with norm $\Vert\,\cdot
\,\Vert_{\ell^1}$ given
by the $\ell^1$ norm of the coefficients $c_{m,n}$. We also obviously
have $\Vert a\Vert_{\ell^1}\ge \Vert a\Vert$ for $a\in\cA_\theta$.
($\Vert \,\cdot\,\Vert$ will for us always denote the {\Ca} norm.)
\end{definition}
The following lemma, related in spirit to the
Sobolev Embedding Theorem \cite[Theorem 1.1]{Kazdan}, relates the
topology of $\cA_\theta$ to the subject of Propositions
\ref{prop:invLap} and \ref{prop:imLap}.  More details of
noncommutative Sobolev space theory will be taken up in Section 
\ref{sec:Sobolev} below.
\begin{lemma}
\label{lem:L1fromD}
Let $f\in A_\theta^\infty$. Then there is a constant $C>0$ such
that {\lp}in the notation of Definition \textup{\ref{def:Fourieralg})}
$\Vert f\Vert_{\ell^1} \le C \Vert (-\Delta+1)f\Vert$. In particular,
the domain of $\Delta$, as an unbounded operator on $A_\theta$,
is contained in $\cA_\theta$.
\end{lemma}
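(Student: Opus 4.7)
The plan is to bound $\|f\|_{\ell^1}$ by a Cauchy--Schwarz estimate. The naive approach---bound each Fourier coefficient pointwise by $|c_{m,n}|\le\|(-\Delta+1)f\|/(4\pi^2(m^2+n^2)+1)$ and sum---\emph{fails}, because $\sum_{(m,n)\in\bZ^2}(1+m^2+n^2)^{-1}$ diverges logarithmically. The saving observation is that $\sum_{(m,n)\in\bZ^2}(1+m^2+n^2)^{-2}$ \emph{does} converge (it is the threshold case in two dimensions), so Cauchy--Schwarz will succeed where a raw $\ell^\infty$ bound cannot.

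Concretely, write $f=\sum_{m,n}c_{m,n}U^mV^n$ and set $h=(-\Delta+1)f$. The computation in the proof of Proposition \ref{prop:invLap} gives the Fourier coefficients of $h$ as $d_{m,n}=(4\pi^2(m^2+n^2)+1)c_{m,n}$. Then
\[
\sum_{m,n}|c_{m,n}| \;=\; \sum_{m,n}\frac{|d_{m,n}|}{4\pi^2(m^2+n^2)+1} \;\le\; \Bigl(\sum_{m,n}|d_{m,n}|^2\Bigr)^{1/2}\Bigl(\sum_{m,n}\frac{1}{(4\pi^2(m^2+n^2)+1)^2}\Bigr)^{1/2}.
\]
The second factor is a finite constant $C$. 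For the first factor, a direct calculation using $UV=e^{2\pi i\theta}VU$ and $\tau(U^aV^b)=\delta_{a,0}\delta_{b,0}$ gives the noncommutative Parseval identity $\tau(h^*h)=\sum_{m,n}|d_{m,n}|^2$; and since $\tau$ is a state and $h^*h\le\|h\|^2\cdot 1$ in $A_\theta$, we have $\tau(h^*h)\le\|h\|^2$. Combining these estimates yields $\|f\|_{\ell^1}\le C\|(-\Delta+1)f\|$.

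For the second assertion, take $f$ in the domain of the closure of $\Delta$ on $A_\theta$, and pick $f_n\in A_\theta^\infty$ with $f_n\to f$ and $\Delta f_n$ convergent in $A_\theta$. Then $(-\Delta+1)f_n$ is Cauchy in the $C^*$-norm, so by the inequality just proved $(f_n)$ is Cauchy in the Banach space $\cA_\theta$. Since $\cA_\theta$ embeds continuously into $A_\theta$, its $\cA_\theta$-limit must agree with $f$, whence $f\in\cA_\theta$.

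The main obstacle---really the only subtle point---is noticing that the pointwise bound on Fourier coefficients is not strong enough in dimension two; once one thinks to factor $(4\pi^2(m^2+n^2)+1)^{-1}$ as $1\cdot(4\pi^2(m^2+n^2)+1)^{-1}$ and apply Cauchy--Schwarz, the sum $\sum(1+m^2+n^2)^{-2}$ converges and the rest is straightforward bookkeeping.
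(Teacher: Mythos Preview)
Your proof is correct and follows essentially the same route as the paper: write $|c_{m,n}|=|d_{m,n}|/(1+4\pi^2(m^2+n^2))$ with $d_{m,n}$ the Fourier coefficients of $(-\Delta+1)f$, apply Cauchy--Schwarz against the square-summable sequence $(1+4\pi^2(m^2+n^2))^{-1}$, and then bound $\|h\|_{\ell^2}\le\|h\|$. The paper does not spell out the closure argument for the ``in particular'' clause, so your second paragraph is a welcome addition.
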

\begin{proof}
Suppose $f=\sum_{m,n} c_{m,n} U^mV^n \in A_\theta^\infty$. Then
\[
\Vert f\Vert_{\ell^1} = \sum_{m,n} |c_{m,n}| =
\sum_{m,n} \bigl(1+4\pi^2(m^2+n^2)\bigr)\, 
c_{m,n}\cdot \frac{a_{m,n}}{1+4\pi^2(m^2+n^2)},
\]
where $|a_{m,n}|=1$. View this as an $\ell^2$ inner product and
estimate it by Cauchy-Schwarz. We obtain
\[
\Vert f\Vert_{\ell^1} \le  C\Vert (-1+\Delta)f\Vert_{\ell^2},
\]
where $\Vert\, \cdot \,\Vert_{\ell^2}$ is the $\ell^2$ norm of the
sequence of Fourier coefficients (this can also be defined by
$\Vert c\Vert_{\ell^2}=\tau(c^*c)^{\frac12}$) and where
\[
C = \left\Vert \{\bigl(1+4\pi^2(m^2+n^2)\bigr)^{-1}\}_{m,n}
\right\Vert_{\ell^2} = \left( \sum_{m,n} 
\frac{1}{\bigl(1+4\pi^2(m^2+n^2)\bigr)^2} \right)^{\frac12} <
\infty.
\]
Since the $\ell^2$ norm is bounded by the {\Ca} norm, as
$\Vert c\Vert_{\ell^2}=\tau(c^*c)^{\frac12} \le \Vert c^*c\Vert^{\frac12}
= \Vert c\Vert$, the result follows.
\end{proof}

The following result was proved several years ago by Gr{\"o}chenig
and Leinert \cite{MR2015328}, using the theory of symmetric
$L^1$-algebras as developed by Leptin, Ludwig, Hulanicki, \emph{et al}. We
include a brief proof here for the sake of completeness.
\begin{theorem}[``Wiener's Theorem'']
\label{thm:Wiener}
The Banach space $\cA_\theta$ is a Banach $*$-algebra and is
closed under the holomorphic functional calculus of $A_\theta$.
Thus if $a\in \cA_\theta$ and $a$ is invertible in $A_\theta$,
$a^{-1}\in \cA_\theta$.
\end{theorem}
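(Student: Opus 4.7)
My plan is to reduce the theorem to a spectral-radius identity inside $\cA_\theta$. The Banach $*$-algebra structure is routine: $(U^mV^n)^* = e^{-2\pi i\theta mn}U^{-m}V^{-n}$ is a unimodular multiple of a basis element, and the commutation relation gives $(U^mV^n)(U^{m'}V^{n'}) = e^{-2\pi i\theta nm'}U^{m+m'}V^{n+n'}$, so $\|\cdot\|_{\ell^1}$ is $*$-invariant and submultiplicative, identifying $\cA_\theta$ with the twisted group algebra $\ell^1(\bZ^2,\sigma_\theta)$. By the Cauchy integral formula, closure under holomorphic functional calculus in $A_\theta$ is equivalent to inverse-closedness, i.e.\ $a\in\cA_\theta$ invertible in $A_\theta$ implies $a^{-1}\in\cA_\theta$. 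Using $a^{-1}=(a^*a)^{-1}a^*$ I would reduce to positive invertible $a$, write $a=\mu(1-c)$ with $c=c^*\in\cA_\theta$ and $\|c\|_A<1$, so that $a^{-1}=\mu^{-1}\sum_{n\ge 0}c^n$ formally. Everything then comes down to proving
\[
\limsup_{n\to\infty}\|c^n\|_{\ell^1}^{1/n}=\|c\|_A
\]
for every self-adjoint $c\in\cA_\theta$, the inequality $\ge$ being immediate from $\|c^n\|_A=\|c\|_A^n\le\|c^n\|_{\ell^1}$.

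For $c\in A_\theta^\infty$ the reverse inequality is in reach via Lemma \ref{lem:L1fromD}: $\|c^n\|_{\ell^1}\le C\|(1-\Delta)c^n\|_A$, and the iterated Leibniz identity $\delta_i(c^n)=\sum_{j=0}^{n-1}c^j\delta_i(c)c^{n-1-j}$ applied twice gives $\|\Delta c^n\|_A\le Kn^2\|c\|_A^{n-2}$, with $K$ depending only on $\|c\|_A$, $\|\delta_i c\|_A$, and $\|\delta_i^2 c\|_A$. Hence $\|c^n\|_{\ell^1}=O(n^2)\|c\|_A^n$, whose $n$th root converges to $\|c\|_A$, and the spectral-radius identity holds in the smooth case.

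The main obstacle is extending this identity to arbitrary $c\in\cA_\theta$, where no a priori derivative bound is available. A natural attempt is to mollify via the gauge average $c_\epsilon=\int_{\bT^2}\phi_\epsilon(z)\alpha_z(c)\,dz$, where $\phi_\epsilon\in C^\infty(\bT^2)$ is a positive approximate identity; then $c_\epsilon\in A_\theta^\infty$, $c_\epsilon\to c$ in both $\cA_\theta$ and $A_\theta$ norms, and $(1-c_\epsilon)^{-1}\in\cA_\theta$ by the previous step. To conclude by the resolvent identity that $\{(1-c_\epsilon)^{-1}\}$ is Cauchy in $\cA_\theta$ one needs a uniform $\ell^1$-bound on $(1-c_\epsilon)^{-1}$, but the smooth-case estimate does not supply this, since its constant depends on derivatives of $c_\epsilon$ that blow up as $\epsilon\to 0$. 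Filling this gap is the heart of the Gr{\"o}chenig-Leinert approach: one first proves that $\ell^1(\bZ^2,\sigma_\theta)$ is a symmetric Banach $*$-algebra (here the polynomial growth of $\bZ^2$ is essential), and then Hulanicki's lemma yields $r_{\cA_\theta}(c)=\|c\|_A$ for all self-adjoint $c$ at one stroke, bypassing the need for a smooth approximation altogether.
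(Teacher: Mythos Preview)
Your proposal ultimately lands on the same mechanism as the paper: reduce inverse-closedness to the self-adjoint case via $a^{-1}=(a^*a)^{-1}a^*$, then use Hulanicki's observation that spectral invariance for self-adjoint elements is equivalent to symmetry of the Banach $*$-algebra $\cA_\theta$. Where you differ is in the route to symmetry and in the detour you take before getting there.

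Your smooth-case argument via Lemma~\ref{lem:L1fromD} and the iterated Leibniz rule is correct and yields the pleasant quantitative estimate $\|c^n\|_{\ell^1}=O(n^2)\|c\|_A^n$ for $c\in A_\theta^\infty$. But, as you yourself note, it does not extend to general $c\in\cA_\theta$, and the mollification attempt fails for exactly the reason you give. So this detour, while instructive, contributes nothing to the final proof: once you invoke symmetry and Hulanicki's lemma, the smooth case is subsumed.

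The genuine gap in your write-up is that you do not actually \emph{prove} symmetry of $\ell^1(\bZ^2,\sigma_\theta)$; you only cite Gr{\"o}chenig--Leinert and mention ``polynomial growth of $\bZ^2$.'' The paper fills this gap by a different and more concrete route: it observes that $\cA_\theta$ is a quotient of $L^1(H)$, where $H$ is the discrete Heisenberg group, by the self-adjoint ideal generated by $c-e^{2\pi i\theta}$. Hulanicki's theorem that $L^1$ of a discrete nilpotent group is symmetric then gives symmetry of $L^1(H)$, and symmetry passes to quotients by self-adjoint ideals. This Heisenberg-group realization is the one step you are missing; with it, the proof is complete and does not require the twisted-convolution machinery of \cite{MR2015328}.
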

\begin{proof}
Suppose $a=\sum c_{m,n}U^mV^n$ with the sum absolutely convergent.
Then
\[
a^* = \sum_{m,n} \overline{c_{m,n}}V^{-n}U^{-m} =
\sum_{m,n} \overline{c_{m,n}}e^{-2\pi i mn\theta}U^{-m}V^{-n}
\]
so $a^*\in \cA_\theta$. Similarly, if also $b=\sum d_{m,n}U^mV^n$ 
(absolutely convergent sum), then $ab$ has Fourier coefficients
given by ``twisted convolution'' of the Fourier coefficients of $a$ and $b$,
and since the twisting only involves scalars of absolute value $1$,
the Fourier coefficients of $ab$ are absolutely convergent. More precisely,
\[
\begin{aligned}
ab & = \left(\sum_{m,n} c_{m,n}U^m V^n\right)\left( 
\sum_{k,l} d_{k,l}U^k V^l\right)\\
&=\sum_{m,n,k,l} c_{m,n} d_{k,l} U^m V^n U^k V^l\\
&=\sum_{m,n,k,l} c_{m,n} d_{k,l} e^{-2\pi i k n \theta} U^{m+k} V^{n+l}\\
&=\sum_{p,q} f_{p,q} U^p V^q,\quad\text{where}\\
f_{p,q} &= \sum_{m,n} c_{m,n} d_{p-m,q-n} e^{-2\pi i (p-m) n \theta},
\text{ so that}\\
|f_{p,q}| &\le \sum_{m,n} |c_{m,n}|\, |d_{p-m,q-n}| \le \Vert c\Vert_{\ell^1}
\Vert d\Vert_{\ell^1}.
\end{aligned}
\]
This confirms that $\cA_\theta$ is a Banach $*$-algebra and of course
a $*$-subalgebra of $A_\theta$.

To prove the analogue of Wiener's Theorem, we unfortunately cannot
use the cute proof using the Gelfand transform, since $\cA_\theta$ is
not commutative. We also cannot use another very elementary proof from
\cite{MR0365002} since this also relies on commutativity.  However
Newman's proof is related to another well-known fact (implicit
in \cite[Lemma 1]{MR572645}), that $A_\theta^\infty$ is
closed under the holomorphic functional calculus of $A_\theta$.
To prove this one has to show that if $b\in A_\theta^\infty$ with 
$b$ invertible in $A_\theta$, then $b^{-1}$ also lies in
$A_\theta^\infty$. To prove this fact, iterate the identity
$\delta_j(b^{-1})= -b^{-1}\,\delta_j(b)\,b^{-1}$
to see that $b^{-1}$ lies in the domain of all monomials in $\delta_1$
and $\delta_2$. One might think that since $A_\theta^\infty$ is dense
in $\cA_\theta$, this should be enough to prove Wiener's Theorem for
the latter, but this doesn't work, since in general the spectrum and
spectral radius functions are only upper semicontinuous, not
continuous, on a noncommutative Banach algebra \cite{MR0051441}.

To prove the theorem, we rely on an observation of Hulanicki
\cite[Proposition 2.5]{MR0323951} based on a theorem
of Raikov \cite[Theorem 5]{MR0019845}: that if
a Banach $*$-algebra $B$ (with isometric involution and a faithful
$*$-representation on a Hilbert space) is embedded in
its enveloping {\Ca} $A$, then the spectra of self-adjoint elements of
$B$ are the same whether computed in $B$ or in $A$ if and only if $B$
is symmetric (i.e., for $x\in B$, the spectrum in $B$ of $x^*x$ is
contained in $[0,\infty)$). We will apply this with $B=\cA_\theta$ and
with $A=A_\theta$. Hulanicki also showed \cite{MR0278082}
that the $L^1$ algebras of discrete nilpotent groups are symmetric.
In particular, the $L^1$ algebra of the discrete Heisenberg group $H$
(with generators $a$, $b$, $c$, where $c$ is central and
$aba^{-1}b^{-1}=c$) is symmetric. Thus $\cA_\theta$, which is the
quotient of $L^1(H)$ by the (self-adjoint)
ideal generated by $c-e^{2\pi i\theta}$,
is also symmetric. (If $B$ is a symmetric Banach $*$-algebra and $J$
is a closed self-adjoint ideal, then $B/J$ is also symmetric,
since if $\dot x\in B/J$ is the image of $x\in B$, then the
spectrum of $\dot x^*\dot x$ in $B/J$ is contained in the spectrum
of $x^*x$ in $B$, hence is contained in $[0,\infty)$.)
So for $x=x^*\in \cA_\theta$, by Hulanicki's
observation, if $x$ is invertible in $A_\theta$, $x^{-1}\in
\cA_\theta$. Suppose $a\in \cA_\theta$ and $a$ is invertible in
$A_\theta$. Then $a^*$ is also invertible in $A_\theta$, so $x=a^*a\in
\cA_\theta$ and $x$ is invertible in $A_\theta$. Hence
$x^{-1}=a^{-1}{a^*}^{-1}\in \cA_\theta$ and $a^{-1}=x^{-1}a^* \in \cA_\theta$.
\end{proof}

In the classical theory of the Laplacian, one of the most useful
tools is the ``Maximum Principle'' (e.g., \cite[p.\ 20]{Kazdan}).
The following is a noncommutative
analogue.
\begin{proposition}[``Maximum Principle'']
\label{prop:MaxPrin}
Let $h=h^*\in A_\theta^\infty$, and let $[t_0,t_1]$ be the smallest
closed interval containing the spectrum $\sigma(h)$ of $h$ 
{\lp}in $A_\theta${\rp}. In other words, let $t_1 = \max \{t:t\in\sigma(h)\}$
and $t_0 = \min \{t:t\in\sigma(h)\}$. Then there exists a state $\varphi$
of $A_\theta$ with $\varphi(h)=t_1$, and for such a state,
$\varphi(\Delta h) \le 0$. Similarly, there exists a state $\psi$
of $A_\theta$ with $\psi(h)=t_0$, and for such a state,
$\psi(\Delta h) \ge 0$. 
\end{proposition}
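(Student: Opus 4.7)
The plan is to exploit the one-parameter automorphism groups $\alpha^{(j)}_s = \exp(s\delta_j)$, $s\in\bR$, that the derivations $\delta_j$ generate. Concretely, on $A_\theta^\infty$ these act by $\alpha^{(1)}_s\bigl(\sum c_{m,n}U^mV^n\bigr) = \sum c_{m,n}e^{2\pi i m s}U^mV^n$ and similarly for $j=2$, and the formula extends to a norm-continuous group of $*$-automorphisms of $A_\theta$ (it is just the restriction of the gauge action $\alpha$ to a one-parameter subgroup of $\bT^2$). I would split the proof into two stages: first produce the states, then use the automorphism group to reduce the inequality to one-variable calculus.

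For the existence of $\varphi$ and $\psi$, I would cite the standard fact that the numerical range $\{\varphi(h) : \varphi\text{ a state of }A_\theta\}$ of a self-adjoint element equals the closed convex hull of $\sigma(h)$, which here is $[t_0,t_1]$. Thus states taking the values $t_1$ and $t_0$ on $h$ exist; explicitly, the evaluation character $h\mapsto t_1$ on the unital commutative {\Ca} $C^*(1,h)\cong C(\sigma(h))$ extends by Hahn--Banach to a state of $A_\theta$, and similarly for $t_0$.

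Now fix a state $\varphi$ with $\varphi(h)=t_1$ and, for each $j=1,2$, consider the curve
\[
f(s) = \varphi\bigl(\alpha^{(j)}_s(h)\bigr),\qquad s\in\bR.
\]
Since $\alpha^{(j)}_s$ is a $*$-automorphism, $\alpha^{(j)}_s(h)$ is self-adjoint with $\sigma\bigl(\alpha^{(j)}_s(h)\bigr)=\sigma(h)$; in particular $\alpha^{(j)}_s(h)\le t_1\cdot 1$ in $A_\theta$. Applying the positive functional $\varphi$ gives $f(s)\le t_1 = f(0)$ for every $s$, so $f$ attains a global maximum at $s=0$. Because the Fourier coefficients of $h\in A_\theta^\infty$ are rapidly decreasing, termwise differentiation of the series for $\alpha^{(j)}_s(h)$ converges in the {\Ca} norm, so $s\mapsto \alpha^{(j)}_s(h)$ is a norm-smooth $A_\theta$-valued curve with first and second derivatives at $s=0$ equal to $\delta_j h$ and $\delta_j^2 h$. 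Composing with the bounded linear functional $\varphi$ yields $f\in C^\infty(\bR)$ with $f''(0)=\varphi(\delta_j^2 h)$, and the one-variable second-derivative test at the maximum forces $\varphi(\delta_j^2 h)\le 0$. Summing over $j=1,2$ gives $\varphi(\Delta h)\le 0$. The argument for $\psi$ is the mirror image: with $\psi(h)=t_0$, the analogous curve has its minimum at $s=0$, so $\psi(\delta_j^2 h)\ge 0$ and hence $\psi(\Delta h)\ge 0$.

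There is no genuine obstacle: the whole statement is just the classical maximum principle transcribed into {\Ca} language, with $*$-automorphisms playing the role of translations on a manifold. The only point that warrants any care is the smoothness of $s\mapsto\alpha^{(j)}_s(h)$ into the {\Ca} $A_\theta$, but this is immediate from the rapid-decay condition built into the definition of $A_\theta^\infty$.
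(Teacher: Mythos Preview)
Your argument is correct and is essentially the same as the paper's: both produce the extremal states by extending characters of $C^*(1,h)$ and then apply the second-derivative test to $s\mapsto\varphi(\alpha_s(h))$, using that the gauge action preserves the spectrum of $h$. The only cosmetic difference is that you treat the two one-parameter subgroups $\alpha^{(j)}$ separately and sum, whereas the paper works with the full $\bT^2$-action at once; your version is in fact more detailed about the smoothness of $s\mapsto\alpha^{(j)}_s(h)$ than the paper's sketch.
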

\begin{proof}
The commutative {\Ca} $C^*(h)$ must have pure states $\wt\varphi$ and
$\wt\psi$ with $\wt\varphi(h)=t_1$, $\wt\psi(h)=t_0$, since
$t_0,\,t_1\in\sigma(h)$. 
Extend these to states $\varphi$, $\psi$ of the larger {\Ca} $A_\theta$.
Then for $s\in G=\bT^2$, the functions $s\mapsto \varphi(\alpha_s(h))$
and $s\mapsto \psi(\alpha_s(h))$ must have a maximum (resp., minimum)
at the identity element of $\bT^2$. (Recall that $\alpha$ is the
gauge action by $*$-automorphisms.) Differentiate twice and the
result follows by the ``second derivative test.''
\end{proof}

Just as in the classical setting, Laplace's equation arises as the
{\EL} equation of a variational problem. 
\begin{definition}
\label{def:energy1}
For $a\in A_\theta^\infty$, let 
\[
E(a) = \frac12\tau\bigl(\delta_1(a)^2 + \delta_2(a)^2\bigr).
\]
This is
clearly the noncommutative analogue of the classical energy functional
\[
f \mapsto  \frac12\int_M \Vert \nabla f \Vert^2\,d\vol
\]
on a compact manifold $M$.
\end{definition}
\begin{proposition}
\label{prop:classEL}
The {\EL} equation for critical points of the energy functional $E$ of
Definition \textup{\ref{def:energy1}}, restricted to self-adjoint elements of
$A_\theta^\infty$,  is just Laplace's equation $\Delta a = 0$. Thus
the only critical points are the scalar multiples of the identity,
which are the points where $E(a)=0$ and are strict minima for $E$.
\end{proposition}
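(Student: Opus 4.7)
The plan is to compute the first variation of $E$ at a self-adjoint $a\in A_\theta^\infty$ in the direction of an arbitrary self-adjoint $b\in A_\theta^\infty$, integrate by parts (Lemma \ref{lem:intbyparts}), and then invoke faithfulness of $\tau$ to read off the Euler-Lagrange equation. Concretely, since $\delta_j$ commutes with $*$, both $\delta_j(a)$ and $\delta_j(b)$ are self-adjoint, so
\[
\frac{d}{dt}\Big|_{t=0} E(a+tb)
= \frac12 \sum_{j=1}^2 \tau\bigl(\delta_j(a)\delta_j(b)+\delta_j(b)\delta_j(a)\bigr)
= \sum_{j=1}^2 \tau(\delta_j(a)\delta_j(b)),
\]
where I used the trace property. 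Lemma \ref{lem:intbyparts} then converts each summand to $-\tau(\delta_j^2(a)b)$, so the first variation is $-\tau(\Delta(a)\,b)$.

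For $a$ to be critical, this must vanish for every self-adjoint $b\in A_\theta^\infty$. Noting that $\Delta(a)$ is itself self-adjoint (since $\Delta$ commutes with $*$), I would set $b=\Delta(a)$, obtaining $\tau(\Delta(a)^2)=0$. Because $\Delta(a)^2\ge 0$ and $\tau$ is a faithful trace, this forces $\Delta(a)=0$, which is the claimed Euler-Lagrange equation. By Proposition \ref{prop:imLap} (or directly from the Fourier expansion), $\ker\Delta$ restricted to $A_\theta^\infty$ consists precisely of the scalar multiples of the identity.

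For the final claim about strict minima, note that for self-adjoint $a$, each $\delta_j(a)^2\ge 0$, so $E(a)\ge 0$. Moreover $E(a)=0$ forces $\tau(\delta_j(a)^2)=0$ for $j=1,2$, and faithfulness of $\tau$ again gives $\delta_1(a)=\delta_2(a)=0$, i.e.\ $a$ is a scalar. Thus the scalar critical points realize the global minimum $E=0$ and every non-scalar self-adjoint element has $E(a)>0$, which is the intended meaning of ``strict minima.''

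The main obstacle is really only bookkeeping: one must be careful that $\delta_j(a)$ and $\delta_j(b)$ do not commute, so the cross-term computation needs the trace property to symmetrize, and one must verify that the pairing $(x,y)\mapsto \tau(xy)$ is nondegenerate on self-adjoints -- which is handled cleanly by the test-function choice $b=\Delta(a)$ combined with faithfulness of $\tau$.
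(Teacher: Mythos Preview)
Your proof is correct and follows essentially the same route as the paper: compute the first variation, symmetrize via the trace property, integrate by parts using Lemma \ref{lem:intbyparts}, and conclude $\Delta a=0$. The only cosmetic difference is that where the paper simply invokes ``nondegeneracy of the trace pairing,'' you make this explicit by testing with $b=\Delta(a)$ and using faithfulness of $\tau$; you also spell out the strict-minimum argument a bit more fully than the paper does.
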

\begin{proof}
This works very much like the classical case. If $a=a^*$ and $h=h^*$,
then
\[
\left.\frac{d}{dt}\right\vert_{t=0} E(a + th) =
\frac12\tau\bigl(\delta_1(a)\delta_1(h) + \delta_1(h)\delta_1(a) +
\delta_2(a)\delta_2(h) + \delta_2(h)\delta_2(a)\bigr)\,.
\]
Because of the trace property, we can write this as
$\tau(\delta_1(a)\delta_1(h) + \delta_2(a)\delta_2(h))$.
For $a$ to be a critical point of $E$, this must vanish for all
choices of $h$. ``Integrating by parts'' using Lemma \ref{lem:intbyparts},
we obtain $\tau(h\Delta(a))=0$ for all $h$, and since the trace
pairing is nondegenerate, we get the {\EL} equation $\Delta a = 0$.
Since $\Delta$ has pure point spectrum with eigenvalues $-4\pi^2
(m^2 + n^2)$ and eigenfunctions $U^mV^n$, the equation has the unique
solution $a=\lambda 1$, $\lambda\in \bR$. These are also the points
where $E$ takes its minimum value of $0$.
\end{proof}

\section{Sobolev spaces}
\label{sec:Sobolev}

In the treatment of Laplace's equation above, we alluded to the theory
of Sobolev spaces. One can develop this theory in the noncommutative setting
in complete analogy with the classical case. To simplify the
treatment, we deal here only with the $L^2$ theory, which gives rise
to Hilbert spaces. These spaces are convenient for applications to
nonlinear elliptic PDE, as we will see in the next section.

\begin{definition}
\label{def:Sobolev}
For $a\in A_\theta$, we define its ``$L^2$ norm''\footnote{This is 
  really the norm for the Hilbert 
  space of the II$_1$ factor representation of $A_\theta$ determined
  by the trace $\tau$.} by
\[
\Vert a\Vert_{\ell^2} = \tau(a^*a)^{\frac12}\, .
\]
We let $L^2$ or $H^0$ (this is the Sobolev space of ``functions'' with
$0$ derivatives in $L^2$) be the completion of $A_\theta$ in this
norm. Obviously this is a Hilbert space, with inner product extending
\[
\langle a,\,b\rangle = \tau(b^* a)
\]
on $A_\theta$. Also note that the norm of $L^2$ is simply the $\ell^2$
norm for the Fourier coefficients, since if $a\in A_\theta^\infty$ has
the Fourier expansion $\sum_{m,n} c_{m,n} U^m V^n$, then
\[
\begin{aligned}
\Vert a\Vert^2_{\ell^2} &= \tau(a^*a)\\
&= \tau \left( \sum_{k,l,m,n} \left( c_{m,n} U^m V^n \right)^* 
c_{k,l} U^k V^l \right)\\
&= \tau \left( \sum_{k,l,m,n} \overline{c_{m,n}}\,c_{k,l} 
V^{-n} U^{-m} U^k V^l \right)\\
&= \sum_{m,n} |c_{m,n} |^2\, .
\end{aligned}
\]

Now let $n\in \bN$. We define the Sobolev space\footnote{Usually this
would be called $H^{n,2}$, but we are 
trying to simplify notation.} $H^n$ of ``functions'' with $n$
derivatives in $L^2$ to be the completion of $A_\theta^\infty$ in the
norm
\[
\Vert a\Vert^2_{H^n} = \sum_{0\le |\beta| \le n} \Vert
\delta_\beta(a)\Vert^2_{\ell^2}  \, .
\]
(These spaces are also defined, with slightly different notation, 
in \cite[\S3]{MR2247860}.)
Here $\beta=\beta_1\beta_2\cdots \beta_{|\beta|}$ runs over sequences
with $\beta_j = 1$ or $2$ and $\delta_\beta$ means
$\delta_{\beta_1}\cdots \delta_{\beta_{|\beta|}}$, a ``partial
derivative'' of order $|\beta|$. For example,
\[
\Vert a\Vert^2_{H^1} = \Vert a\Vert^2_{\ell^2} + \Vert
\delta_1(a)\Vert^2_{\ell^2} + \Vert \delta_2(a)\Vert^2_{\ell^2} 
\, .
\]
The Sobolev space $H^n$ is clearly a Hilbert space, and we obviously
have norm-decreasing inclusions $H^n \hookrightarrow H^{n-1}$.
Furthermore, it is clear that the Sobolev norms are invariant under
taking adjoints and can easily be
expressed in terms of the Fourier coefficients; for example, if
$a\in A_\theta^\infty$ has
the Fourier expansion $\sum_{m,n} c_{m,n} U^m V^n$, then
\[
\Vert a\Vert^2_{H^1} = \sum_{m,n} \bigl(1+4\pi^2(m^2+n^2)\bigr) 
|c_{m,n}|^2 \, .
\]
\end{definition}

The following is the exact analogue of the classical Sobolev Embedding
Theorem \cite[Theorem 1.1]{Kazdan} for $\bT^2$.

\begin{theorem}[``Sobolev Embedding'']
\label{thm:SobolevEmbedding}
The inclusion $H^n \hookrightarrow H^{n-1}$ is compact.
The space $H^1$ is not contained in $A_\theta$, but $H^2$
has a compact inclusion into $\cA_\theta$ {\lp}and thus into
$A_\theta${\rp}.
\end{theorem}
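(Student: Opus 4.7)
The plan is to reduce everything to a weighted $\ell^2$ description of the Sobolev norms. Since $\delta_1(U^mV^n)=2\pi im\,U^mV^n$ and $\delta_2(U^mV^n)=2\pi in\,U^mV^n$, the monomials $U^mV^n$ are orthogonal in every $H^k$, and a short combinatorial count over index strings $\beta$ yields constants $c_k,C_k>0$ with
\[
c_k\sum_{m,n}(1+m^2+n^2)^k|c_{m,n}|^2\;\le\;\Vert a\Vert_{H^k}^2\;\le\;C_k\sum_{m,n}(1+m^2+n^2)^k|c_{m,n}|^2
\]
for $a=\sum c_{m,n}U^mV^n\in A_\theta^\infty$. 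All three statements then become parallel to the classical ones on $\bT^2$.

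For the compactness of $H^n\hookrightarrow H^{n-1}$, I would take a bounded sequence $\{a_k\}\subset H^n$. Since each Fourier coefficient is bounded by the $L^2$-norm, and hence by the $H^n$-norm, a diagonal extraction gives a subsequence $\{a_{k_j}\}$ converging coefficient-wise to some $\{c_{m,n}\}$, and by Fatou the formal series $a=\sum c_{m,n}U^mV^n$ lies in $H^n$. I would then split $\Vert a_{k_j}-a\Vert_{H^{n-1}}^2$ over $\{m^2+n^2\le N\}$ and $\{m^2+n^2>N\}$: the low-frequency part is a finite sum that goes to $0$ by coefficient-wise convergence, while the high-frequency part is controlled by the trivial estimate $(1+m^2+n^2)^{n-1}\le\tfrac{1}{1+N}(1+m^2+n^2)^n$, which bounds it by $\tfrac{1}{1+N}\Vert a_{k_j}-a\Vert_{H^n}^2$, hence by a constant times $(1+N)^{-1}$. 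This is the exact noncommutative analogue of Rellich's compactness argument.

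For the compact embedding $H^2\hookrightarrow\cA_\theta$, I would first establish continuity by the same Cauchy-Schwarz trick used in Lemma \ref{lem:L1fromD}, pairing $(1+m^2+n^2)|c_{m,n}|$ against $(1+m^2+n^2)^{-1}$ in $\ell^2$ to get $\Vert a\Vert_{\ell^1}\le C\,\Vert a\Vert_{H^2}$. Compactness then follows from the same diagonal extraction as before, with the analogous tail estimate
\[
\sum_{m^2+n^2>N}|c_{m,n}|\;\le\;\Vert a_{k_j}-a\Vert_{H^2}\,\left(\sum_{m^2+n^2>N}(1+m^2+n^2)^{-2}\right)^{1/2},
\]
whose second factor vanishes as $N\to\infty$, while the finite low-frequency part converges coefficient-wise.

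The hard part is the non-inclusion $H^1\not\subset A_\theta$, since here the noncommutativity actually matters. The plan is to imitate the classical counterexample on $\bT^2$: a function like $f(x,y)\sim\log\log(1/|(x,y)|)$ with a smooth cutoff lies in $H^1(\bT^2)$ but is unbounded, so its Fourier coefficients $\{c_{m,n}\}$ satisfy $\sum(1+m^2+n^2)|c_{m,n}|^2<\infty$. Setting $\xi=\sum c_{m,n}U^mV^n$ yields a vector of $H^1$, and I would show $\xi\notin A_\theta$. Since $A_\theta\subset M_\tau$, the II$_1$ factor obtained from the GNS construction on $\tau$, and since a vector in $L^2(M_\tau,\tau)$ lies in $M_\tau$ if and only if its left multiplication is bounded on $L^2$, it suffices to show $L_\xi$ is unbounded. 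In the Fourier basis $\{U^kV^l\}$ of $L^2$, $L_\xi$ has matrix entries $c_{k'-k,\,l'-l}\,e^{-2\pi ik(l'-l)\theta}$, i.e.\ it is a twisted convolution operator on $\ell^2(\bZ^2)$; testing this against a sequence of normalized vectors concentrated near the frequencies where the classical $f$ is singular should establish unboundedness. Equivalently, and perhaps more operationally, one produces a sequence $\{a_N\}\subset A_\theta^\infty$ with $\Vert a_N\Vert_{H^1}$ bounded but $\Vert a_N\Vert_{A_\theta}\to\infty$ and concludes by the closed graph theorem.
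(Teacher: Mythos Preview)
Your treatment of the two compactness claims is correct and is essentially a spelled-out version of the paper's one-line argument. The paper simply observes that the Sobolev norms depend only on the Fourier coefficients and hence identifies each $H^k$ with the classical $H^k(\bT^2)$; Rellich's lemma then gives compactness of $H^n\hookrightarrow H^{n-1}$, and the $\ell^1$-estimate of Lemma~\ref{lem:L1fromD} together with finite-rank truncation gives the compact embedding $H^2\hookrightarrow\cA_\theta$. Your diagonal-extraction plus tail-estimate arguments are exactly these reductions made explicit, so on those parts you and the paper agree.

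You are also right that the non-inclusion $H^1\not\subset A_\theta$ is the one place where the noncommutativity genuinely enters. The paper's proof literally says only that ``this follows immediately from the classical Sobolev Embedding Theorem in dimension~2''; it does not explain why the classical failure $H^1(\bT^2)\not\subset C(\bT^2)$ transports to $A_\theta$, and unlike the Sobolev norms the $C^*$-norm on $A_\theta$ is \emph{not} determined by the Fourier coefficients alone. Your reformulation---show that left multiplication $L_\xi$ by the classical $\log\log(1/r)$ counterexample is unbounded on $\ell^2(\bZ^2)$, or equivalently exhibit $a_N\in A_\theta^\infty$ with $\Vert a_N\Vert_{H^1}$ bounded and $\Vert a_N\Vert_{A_\theta}\to\infty$---is exactly the right translation of the problem. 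But you stop at ``testing \dots\ should establish unboundedness,'' and that is a genuine gap: the phases $e^{-2\pi i k(l'-l)\theta}$ in the twisted convolution matrix can in principle produce cancellation absent when $\theta=0$, so the unboundedness of $L_\xi$ does not follow automatically from the commutative case and must be argued separately. In short, you have correctly located the only nontrivial step and sketched a reasonable plan of attack, but neither you nor the paper actually carries it through.
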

\begin{proof}
Since the Sobolev norms just depend on the decay of the Fourier
coefficients, this follows immediately from the classical 
Sobolev Embedding Theorem in dimension $2$. The inclusion of $H^2$
into $\cA_\theta$ also follows from the estimate
\[
\Vert f\Vert_{\ell^1} \le  C\Vert (-1+\Delta)f\Vert_{\ell^2},
\]
in the proof of Lemma \ref{lem:L1fromD}, with the compactness coming
from the fact that we can approximate by the finite rank operators
that truncate the Fourier series after finitely many terms.
\end{proof}

\section{Nonlinear problems involving the Laplacian}
\label{sec:nonlin}

Somewhat more interesting, and certainly more difficult to treat than
the situation of Proposition \ref{prop:classEL}, are certain
nonlinear problems involving the Laplacian, of the general form
$\Delta u = f(u)$. Such problems arise classically from the problem of
prescribing the scalar curvature of a metric $e^ug$ obtained by
conformally deforming the original metric $g$ on a Riemannian manifold
$M$ \cite[Chs.\ 5, 7]{Kazdan}. For example, if $g$ is the usual flat
metric on $\bT^2$, then the scalar curvature $h$ of the pointwise conformal
metric $e^ug$ solves the equation $\Delta u = - h e^u$. (This
equation is studied in detail in \cite[\S 5]{MR0343205}.)
Because of the
Gauss-Bonnet theorem on the torus, $h$ must integrate out to $0$, so
there are no solutions with $h$ a constant unless $h=0$ and $u$ is a
constant. This fact has an exact analogue in our noncommutative
setting.
\begin{proposition}
\label{prop:NCGaussBonnet}
If $\lambda\in \bR$, the equation $\Delta u = - \lambda e^u$ has no solution
$u = u^* \in A_\theta^\infty$ unless $\lambda = 0$ and $u$ is a scalar
multiple of $1$.
\end{proposition}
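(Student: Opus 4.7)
The plan is to mimic the classical argument by pairing both sides of the equation with the trace, using the fact that the image of $\Delta$ has zero trace (Proposition~\ref{prop:imLap}), and then invoking faithfulness of $\tau$ to conclude that the scalar factor must vanish.

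First I would observe that since $u=u^*\in A_\theta^\infty$ and $A_\theta^\infty$ is closed under the holomorphic functional calculus of $A_\theta$ (as noted in the proof of Theorem~\ref{thm:Wiener}), the exponential $e^u$ is a well-defined self-adjoint element of $A_\theta^\infty$. Moreover, by the continuous functional calculus applied to the real-valued function $t\mapsto e^t$ on $\sigma(u)\subset\bR$, we have
\[
e^u \;\ge\; e^{-\Vert u\Vert}\cdot 1 \;>\; 0
\]
as an operator in $A_\theta$. Applying $\tau$ to both sides of the hypothesized equation $\Delta u = -\lambda e^u$, and using Proposition~\ref{prop:imLap} to get $\tau(\Delta u)=0$, yields
\[
0 \;=\; \tau(\Delta u) \;=\; -\lambda\,\tau(e^u).
\]

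Next I would argue that $\tau(e^u)>0$. Since $\tau$ is a faithful tracial state and $e^u$ is a nonzero positive element, $\tau(e^u)\ge e^{-\Vert u\Vert}\tau(1) = e^{-\Vert u\Vert}>0$. Therefore the equation above forces $\lambda=0$.

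Finally, with $\lambda=0$ the original equation reduces to $\Delta u = 0$, so $u\in\ker\Delta$. From the Fourier-series formula for $\Delta$ recorded in the proof of Proposition~\ref{prop:invLap}, the kernel of $\Delta$ on $A_\theta^\infty$ is precisely the scalar multiples of $1$; alternatively, this follows directly from Corollary~\ref{cor:nosubh} applied to the (trivially) subharmonic element $u$. No real obstacle arises: the only subtlety worth noting is the verification that $e^u$ lies in $A_\theta$ (indeed in $A_\theta^\infty$) and is strictly positive, which is what allows the faithfulness of $\tau$ to eliminate the nonlinear term cleanly.
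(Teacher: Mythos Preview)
Your proof is correct and is essentially the paper's first proof, just slightly streamlined: the paper observes that $-\lambda e^u$ has a definite sign and invokes Corollary~\ref{cor:nosubh} on subharmonic elements (whose proof is exactly the trace argument $\tau(\Delta u)=0$ plus faithfulness), whereas you apply $\tau$ directly to the equation to kill $\lambda$. The paper also records a second proof via the Maximum Principle (Proposition~\ref{prop:MaxPrin}), which is a genuinely different route you may find worth comparing.
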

\begin{proof}
Suppose $u = u^* \in A_\theta^\infty$. Then $e^u \ge 0$, so if
$\lambda\ne 0$, either $\lambda e^u \ge 0$ or $-\lambda e^u \ge 0$.
Thus if $\Delta u = - \lambda e^u$, either $u$ or $-u$ is
subharmonic. The result now follows from Corollary \ref{cor:nosubh}.
\end{proof}
\begin{proof}[Alternative Proof]
Use the Maximum Principle, Proposition \ref{prop:MaxPrin}. Let $[a,b]$ be 
the smallest closed interval containing
the spectrum of $u$. Then for any state $\varphi$ of $A_\theta$,
$a\le \varphi(u)\le b$ and $\varphi(e^u)\ge e^a > 0$. If
$\Delta u = - \lambda e^u$ and $\lambda>0$, then by Proposition 
\ref{prop:MaxPrin}, there is a state $\varphi$ with $\varphi(u) = a$
and $\varphi(\Delta u)\ge 0$, while $\varphi(-\lambda e^u) < 0$, a
contradiction. Similarly, if $\lambda < 0$ and $\Delta u = - \lambda e^u$,
there is a state $\varphi$ with $\varphi(u) = b$
and $\varphi(\Delta u)\le 0$, while $\varphi(-\lambda e^u) > 0$, a
contradiction. 
\end{proof}

Proposition \ref{prop:NCGaussBonnet} suggests that we consider the
equation $\Delta u = - \frac12\bigl( h e^u + e^u h\bigr)$ 
with $h=h^*$ not a scalar. (Note
that we have symmetrized the right-hand side to make it self-adjoint,
since $u=u^*$ implies $\Delta u$ is self-adjoint.) Once again, a slight
variation on the argument of Proposition \ref{prop:NCGaussBonnet}
shows that there is no solution if $h\ge 0 $ or if $h\le 0$; again
this is not surprising since one gets the same result in the classical
case as a consequence of Gauss-Bonnet.
\begin{proposition}
\label{prop:NCGaussBonnet1}
If $h\ge 0$ or $h\le 0$ in $A_\theta^\infty$, the equation $\Delta u =
-\frac12\bigl( h e^u + e^u h\bigr)$ has no solution 
$u = u^* \in A_\theta^\infty$ unless $h = 0$ and $u$ is a scalar
multiple of $1$.
\end{proposition}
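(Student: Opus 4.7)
The plan is to take the trace of both sides of $\Delta u = -\frac12(he^u + e^u h)$. By Proposition \ref{prop:imLap}, the left side contributes $\tau(\Delta u) = 0$, while the trace property converts the right side into $\tau(he^u + e^u h) = 2\tau(he^u) = 2\tau(e^{u/2} h e^{u/2})$. Here $e^{u/2}$ is understood via continuous functional calculus on the bounded self-adjoint element $u$; it is a positive element of $A_\theta$, bounded below by $e^{(\min\sigma(u))/2}\cdot 1 > 0$, and therefore invertible. Under the hypothesis $h \ge 0$, the element $e^{u/2} h e^{u/2}$ is positive, so the identity $\tau(e^{u/2} h e^{u/2}) = 0$ combined with faithfulness of $\tau$ forces $e^{u/2} h e^{u/2} = 0$. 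Invertibility of $e^{u/2}$ then yields $h = 0$. The case $h \le 0$ is handled by applying the same argument to $-h$.

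Once $h = 0$ is established, the equation collapses to $\Delta u = 0$, and Corollary \ref{cor:nosubh} (or equivalently the conclusion of Proposition \ref{prop:classEL}) forces $u$ to be a scalar multiple of $1$.

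The only point worth flagging is what \emph{not} to do: one might hope to mimic the first proof of Proposition \ref{prop:NCGaussBonnet} by asserting that $\frac12(he^u + e^u h)$ is signed when $h$ is, but a symmetrized product $AB + BA$ of two positive operators need not be positive in a noncommutative setting, so this direct route fails. Similarly, the Maximum Principle (Proposition \ref{prop:MaxPrin}) does not adapt immediately, since at an extremal state $\psi$ one only controls $\psi(u)$, whereas $\psi(he^u + e^u h) = 2\Re\psi(he^u)$ has no determined sign from $h \ge 0$ alone when $h$ is not a scalar. Integrating against the faithful trace $\tau$ sidesteps both issues, and there is no genuine obstacle beyond recognizing that this is the right tool.
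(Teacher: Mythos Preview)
Your proof is correct and follows essentially the same approach as the paper: apply the trace to both sides, use $\tau(\Delta u)=0$, and exploit faithfulness of $\tau$ on a positive element to force $h=0$. The only cosmetic difference is that the paper conjugates $e^u$ by $h^{1/2}$ (writing $\tau(he^u)=\tau(h^{1/2}e^u h^{1/2})$ and then $h^{1/2}e^u h^{1/2}=(e^{u/2}h^{1/2})^*(e^{u/2}h^{1/2})$), whereas you conjugate $h$ by $e^{u/2}$; both yield the same conclusion by the same mechanism.
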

\begin{proof}
This is just like the proof of Proposition \ref{prop:NCGaussBonnet}.
If $h\ge 0$ and $\Delta u =- \frac12\bigl(
h e^u + e^u h\bigr)$, then applying $\tau $ to both sides, we get
\begin{equation}
\label{eq:van}
0 = \tau(\Delta u) = - \tau (h e^u) = - \tau \left(h^{\frac12} e^u
h^{\frac12} \right).
\end{equation}
Since 
\[
h^{\frac12} e^u h^{\frac12} = \left( e^{\frac u2}h^{\frac12} \right)^*
\left( e^{\frac u2}h^{\frac12} \right) \ge 0
\]
and $\tau$ is faithful, that implies $e^{\frac u2}h^{\frac12} = 0$.
Since $e^{\frac u2}$ is invertible, it follows that $h^{\frac12} = 0$
and $h=0$. The case where $h\le 0$ is almost identical; just replace
$h$ by $-h$ and change the sign of the right-hand side of \eqref{eq:van}.
\end{proof}

Unfortunately, the rest of the treatment in \cite[\S 5]{MR0343205}
doesn't extend to our setting, since from the calculation
\[
\tau(h) = \frac12 \tau\bigl(e^{-u}he^u + h \bigr) 
= -\tau(e^{-u}\Delta u),
\]
it is not clear if $\tau(h) < 0 $ follows. (The problem is
that we can't commute the various factors that arise from expanding
$\delta_j(e^{-u})$ after ``integration by parts.'') But since the main purpose
of this section is just to test various techniques and see to what
extent they apply to nonlinear noncommutative elliptic PDEs, we will
consider instead the following more tractable equation from
\cite[Ch.\ 5]{Kazdan}:
\begin{equation}
\label{eq:Kaz}
\Delta u =  \mu\,e^u - \lambda,\quad \lambda,\,\mu \in \bR,
\ \lambda,\,\mu > 0.
\end{equation}
\begin{theorem}
\label{thm:nonlin}
The equation \textup{\eqref{eq:Kaz}} has the unique solution 
$t_0=\ln(\lambda/\mu)$ in
$\bigl(A_\theta^\infty\bigr)_{\text{s.a.}}$.
\end{theorem}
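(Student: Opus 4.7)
The plan is to first verify that $t_0 = \ln(\lambda/\mu)$ (viewed as a scalar multiple of $1$) really does solve the equation, and then to prove uniqueness by exploiting the Maximum Principle (Proposition~\ref{prop:MaxPrin}) together with the commutative functional calculus of $u$. Existence is immediate: if $u = t_0 \cdot 1$, then $\Delta u = 0$, while $\mu e^u - \lambda = \mu \cdot (\lambda/\mu) - \lambda = 0$.

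For uniqueness, suppose $u = u^* \in A_\theta^\infty$ satisfies \eqref{eq:Kaz}, and let $[s_0,s_1]$ denote the smallest closed interval containing $\sigma(u)$. By Proposition~\ref{prop:MaxPrin} there exist states $\varphi, \psi$ on $A_\theta$ with $\varphi(u) = s_1$, $\psi(u) = s_0$, and $\varphi(\Delta u) \le 0 \le \psi(\Delta u)$. The key observation is that the restriction of $\varphi$ to the commutative \Ca{} $C^*(u) \cong C(\sigma(u))$ is a state that pairs $u$ with $s_1 = \max \sigma(u)$; since this forces the associated probability measure on $\sigma(u)$ to be concentrated at the right endpoint, we obtain $\varphi(f(u)) = f(s_1)$ for every continuous $f$ on $\sigma(u)$. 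In particular $\varphi(e^u) = e^{s_1}$, and symmetrically $\psi(e^u) = e^{s_0}$.

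Now apply $\varphi$ and $\psi$ to both sides of \eqref{eq:Kaz}. From $\varphi(\Delta u) = \mu e^{s_1} - \lambda \le 0$ we deduce $e^{s_1} \le \lambda/\mu$, i.e.\ $s_1 \le t_0$. Likewise $\psi(\Delta u) = \mu e^{s_0} - \lambda \ge 0$ gives $s_0 \ge t_0$. Combined with $s_0 \le s_1$, this forces $s_0 = s_1 = t_0$, so $\sigma(u) = \{t_0\}$ and hence $u = t_0 \cdot 1$.

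The only step that requires a moment's thought is the evaluation $\varphi(e^u) = e^{s_1}$; the rest is bookkeeping. There is really no serious analytic obstacle here because the Maximum Principle of Proposition~\ref{prop:MaxPrin} has already done the heavy lifting of producing a state at which the ``second derivative test'' pins down the sign of $\varphi(\Delta u)$, and the nonlinear term $\mu e^u - \lambda$ is harmless once we know the state restricts to a point evaluation on $C^*(u)$.
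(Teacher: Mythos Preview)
Your proof is correct, and it takes a genuinely different route from the paper's. The paper proves uniqueness by a variational/energy argument: it introduces the functional $\cL(u)=E(u)+\tau(\mu\,e^u-\lambda u)$, identifies \eqref{eq:Kaz} as its Euler--Lagrange equation, and then, writing a putative solution as $u=t_0+v$, multiplies the resulting equation $\Delta v=\lambda(e^v-1)$ by $v$, applies $\tau$, and uses $-2E(v)=\lambda\,\tau(v(e^v-1))$ together with the sign pattern $E(v)\ge 0$ and $t(e^t-1)\ge 0$ to force $v=0$. You instead go straight through the Maximum Principle, in the spirit of the paper's alternative proof of Proposition~\ref{prop:NCGaussBonnet} and the \emph{a priori} estimates in Theorem~\ref{thm:nonlin1}: pick states realizing the endpoints of $\sigma(u)$, observe (correctly) that any such state restricts to the Dirac measure at that endpoint on $C^*(u)$ and hence evaluates $e^u$ as $e^{s_j}$, and read off $s_1\le t_0\le s_0$. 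Your argument is shorter and avoids the variational machinery entirely; the paper's argument, on the other hand, sets up the Lagrangian viewpoint that is thematically central to the section and is the template for the harder Theorem~\ref{thm:nonlin1}. One small remark: you need not worry about whether Proposition~\ref{prop:MaxPrin} asserts the sign condition for \emph{every} state achieving the extremum or only for the one constructed---its proof in fact works for any such state, and in any case your observation that the restriction to $C^*(u)$ is forced to be the point mass makes this moot.
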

\begin{proof}
Let 
\[
\cL(u) = E(u) + \tau(\mu\, e^u - \lambda u)\,.
\]
Note that for $t\in \bR$, $\mu\, e^t - \lambda t $
has an absolute minimum at $t=t_0$, so $\mu\, e^u - \lambda u 
\ge \lambda(1-t_0)$ for
$u=u^*$ and so $\cL(u)\ge \lambda(1-t_0)$ for $u=u^*$. Furthermore, the {\EL}
equation for a critical point of $\cL$ is precisely \eqref{eq:Kaz},
since
\[
\left.\frac{d}{dt}\right\vert_{t=0} \cL(u + th)
=\tau(\delta_1(u)\delta_1(h) + \delta_2(u)\delta_2(h) - \lambda h)
+\left.\frac{d}{dt}\right\vert_{t=0}\tau\bigl(\mu\,e^{u+th}\bigr)\,,
\]
via the calculation in the proof of Proposition \ref{prop:classEL}.
Now
\[
\begin{aligned}
\left.\frac{d}{dt}\right\vert_{t=0}\tau\bigl(e^{u+th}\bigr)
&=
\left.\frac{d}{dt}\right\vert_{t=0}
\sum_{n=0}^\infty \frac{1}{n!}\tau\bigl((u+th)^n\bigr)\\
&=\sum_{n=0}^\infty \frac{1}{n!}\tau\bigl(u^{n-1}h + u^{n-2}hu +
\cdots + uhu^{n-2} + h u^{n-1}\bigr)\\
&=\sum_{n=0}^\infty \frac{n}{n!}\tau\bigl(h u^{n-1}\bigr)
= \tau(h e^u)
\end{aligned}
\]
by invariance of the trace under cyclic permutations of the factors.
So applying Lemma \ref{lem:intbyparts}, we see
that
\[
\left.\frac{d}{dt}\right\vert_{t=0} \cL(u + th)
=\tau(-h\Delta(u) - \lambda h + \mu\,h e^u) = 
-\tau\bigl(h\cdot (\Delta u + \lambda - \mu\,e^u)
\bigr)\,.
\]
So nondegeneracy of the trace pairing gives \eqref{eq:Kaz}
as the {\EL} equation for a critical point of $\cL$. It is also
clear that $t_0$ is an absolute minimum for $\cL$ and a solution
of \eqref{eq:Kaz}. It remains to prove the uniqueness. Suppose
$u$ is a solution of \eqref{eq:Kaz} and write $u=t_0+v$. Then $v$ satisfies
the equation $\Delta v = \lambda(e^v-1)$, and we need to show $v=0$.
Multiply both sides by $v$ and apply $\tau$. We obtain
(using Lemma \ref{lem:intbyparts})
\[
-2 E(v) = \tau(v\Delta v) = \lambda \tau(v(e^v-1)).
\]
The left-hand side is $\le 0$, while since $\lambda>0$ and
$t(e^t-1)\ge 0$ with equality only at $t=0$, the right-hand side is
$\ge 0$. Thus $E(v)=0$, which implies 
$v$ is a scalar with $v(e^v-1)=0$, i.e., $v=0$.
\end{proof}

With techniques reminiscent of \cite[Ch.\ 5]{Kazdan}
we can study a slightly more complicated variant of \eqref{eq:Kaz}.
\begin{theorem}
\label{thm:nonlin1}
Let $a\ge 0$ be invertible in $A_\theta^\infty$. Then the
equation 
\begin{equation}
\label{eq:Kaz1}
\Delta u =  \mu\,e^u - a,\quad \mu \in \bR,\ \mu > 0
\end{equation}
has a solution $u\in \bigl(A_\theta^\infty\bigr)_{\text{s.a.}}$.
\end{theorem}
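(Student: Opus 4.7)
The plan is to apply the direct method of the calculus of variations. Consider the functional
\[
\cL(u)=E(u)+\mu\,\tau(e^u)-\tau(au)
\]
on the self-adjoint part of the Sobolev space $H^1$ from Section~\ref{sec:Sobolev}. A Taylor-series manipulation identical to the one in the proof of Theorem~\ref{thm:nonlin}, now combined with the cyclic identity $\tau(ah)=\tau(ha)$, shows that the Euler--Lagrange equation for $\cL$ restricted to self-adjoint elements is precisely \eqref{eq:Kaz1}, so it suffices to produce a minimizer of $\cL$ and verify that it is smooth.

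To see that $\cL$ is bounded below and coercive on $(H^1)_{\text{s.a.}}$, decompose $u=\bar u\cdot 1+u_0$ with $\bar u=\tau(u)\in\bR$ and $\tau(u_0)=0$, and similarly $a=\tau(a)\cdot 1+a_0$. Since $E(u)=E(u_0)$, since the operator Jensen inequality for the faithful trace $\tau$ gives $\tau(e^u)\ge e^{\bar u}$, and since $\tau(au)=\tau(a)\bar u+\tau(a_0u_0)$, one obtains
\[
\cL(u)\ge E(u_0)+\bigl(\mu e^{\bar u}-\tau(a)\bar u\bigr)-|\tau(a_0u_0)|.
\]
The spectral description of $\Delta$ in Definition~\ref{def:Laplacian} yields the Poincar\'e inequality $\Vert u_0\Vert_{\ell^2}^2\le (2\pi^2)^{-1}E(u_0)$, so Cauchy--Schwarz and AM--GM absorb the last term into $\tfrac12E(u_0)+C$. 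Because $\tau(a)>0$, the scalar function $t\mapsto \mu e^t-\tau(a)t$ is bounded below and tends to $+\infty$ at both ends, yielding both a uniform lower bound and the coercivity $\cL(u)\to+\infty$ as $\Vert u\Vert_{H^1}\to\infty$.

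For weak lower semicontinuity, $E$ is a nonnegative continuous quadratic form (hence convex and weakly lsc), $u\mapsto\tau(au)$ extends to a continuous linear functional on $L^2$ and so is weakly continuous on $H^1$, and $u\mapsto\tau(e^u)$ is convex by operator Jensen; combined with a noncommutative Trudinger--Moser estimate bounding $\tau(e^u)$ in terms of $\Vert u\Vert_{H^1}$ (and the resulting strong continuity), convex-plus-strongly-continuous gives weakly lsc. A minimizing sequence then has a weakly convergent subsequence whose limit $u\in (H^1)_{\text{s.a.}}$ minimizes $\cL$ and weakly solves \eqref{eq:Kaz1}. To pass from weak solution to smooth solution one bootstraps: $u\in H^1$ with Trudinger--Moser puts $e^u\in L^2$, so $\Delta u\in L^2$ lifts $u$ to $H^2\subset \cA_\theta$; thereafter $e^u$ is bounded, and applying Proposition~\ref{prop:invLap} to the reformulation $(-\Delta+1)u=u-\mu e^u+a$ inductively sends $u\in H^n$ to $u\in H^{n+2}$, placing $u\in\bigcap_n H^n=A_\theta^\infty$.

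The analytic heart of the argument --- and the main obstacle --- is the noncommutative Trudinger--Moser estimate required to make sense of $\tau(e^u)$ on all of $H^1$ and to establish its continuity in the strong $H^1$ topology. If this proves too delicate, a Leray--Schauder continuity method offers an alternative: join \eqref{eq:Kaz1} to the scalar equation $\Delta u=\mu e^u-\tau(a)\cdot 1$ solved in Theorem~\ref{thm:nonlin} along the homotopy $\Delta u=\mu e^u-\bigl((1-s)\tau(a)\cdot 1+sa\bigr)$, $s\in[0,1]$, with openness of the solvable parameter set coming from invertibility of the symmetrized linearization $h\mapsto -\Delta h+\mu\int_0^1 e^{su}he^{(1-s)u}\,ds$ (a bounded positive perturbation of $-\Delta$), and closedness from the same a priori $H^1$ bounds derived above.
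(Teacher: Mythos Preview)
Your variational outline parallels the paper's own ``sketch of a second proof,'' and your Leray--Schauder alternative is in the spirit of the paper's primary proof, but in both places you are missing the one device that makes everything work without appeal to an unproved noncommutative Trudinger--Moser inequality: the Maximum Principle (Proposition~\ref{prop:MaxPrin}). Applied to \eqref{eq:Kaz1} (or to any of the homotoped equations), it yields the uniform $L^\infty$ bound $-\ln\Vert a^{-1}\Vert \le u \le \Vert a\Vert$ on \emph{every} self-adjoint solution. This is the paper's estimate \eqref{eq:apriori}, and it is what replaces Trudinger--Moser.

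In the paper's main proof the homotopy is $\Delta u=(1-t)u+te^u-a$, linear at $t=0$; the Maximum Principle supplies the uniform a priori bound, and compactness of $(-\Delta+1)^{-1}\colon A_\theta\to A_\theta$ feeds directly into the Leray--Schauder Theorem. Your alternative homotopy retains the full nonlinearity at every $s$, and for closedness you invoke ``the same a priori $H^1$ bounds derived above.'' But those came from the value of $\cL$ at a \emph{minimizer}; a continuity-method branch need not consist of minimizers, so that argument does not close. The Maximum Principle bound, by contrast, applies to any solution, and would rescue your version too.

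For the direct variational route, the paper's trick is this: since any solution must satisfy $-\ln\Vert a^{-1}\Vert\le u\le\Vert a\Vert$, one may replace $e^t$ on the right-hand side by a smooth function equal to $e^t$ on a slightly larger interval but growing only linearly outside it. Solutions of the modified equation coincide with those of the original, while the modified ``$\tau(e^u)$'' term now behaves like a constant times $\Vert u\Vert_{\ell^2}^2$ for large $u$. The modified $\cL$ is then well defined, coercive, and weakly lower semicontinuous on $(H^1)_{\text{s.a.}}$ by entirely elementary means---no Trudinger--Moser needed. Your Jensen/Poincar\'e lower bound is correct as far as it goes, but without either this truncation or a genuine Trudinger--Moser estimate you cannot even define $\cL$ on all of $(H^1)_{\text{s.a.}}$, and your first bootstrap step ``$u\in H^1\Rightarrow e^u\in L^2$'' is exactly the missing inequality.

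Finally, the paper's regularity argument is different from your Sobolev bootstrap: once $u\in\cA_\theta$, it iterates through a tower of weighted $\ell^1$ Banach $*$-algebras with norms $\sum_{m,n}\bigl(C_k+(m^2+n^2)^k\bigr)|c_{m,n}|$, using at each stage that the algebra is closed under $u\mapsto e^u$, to push the Fourier coefficients to rapid decay. Your $H^n\to H^{n+2}$ iteration is plausible once $u$ is bounded, but you would still need to check that $H^n$ for $n\ge 2$ is stable under $u\mapsto e^u$.
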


Without loss of generality (as a result of replacing $u$
by $u-\ln\mu$) we can take $\mu=1$; that simplifies the
calculations and we make this simplification from now on.
Some condition on $a$ beyond the fact that $a\ge 0$, for example
at least $a\ne 0$, is necessary because of Proposition
\ref{prop:NCGaussBonnet}, and we see that any solution of
\eqref{eq:Kaz1} must satisfy $\tau(e^u) = \tau(a) > 0$.
\begin{proof}
Several methods are available for proving existence, 
but the simplest seems to be
to apply the Leray-Schauder Theorem (\cite{MR1509338}, \cite[Theorem 
5.5]{Kazdan}). Consider the family of equations
\begin{equation}
\label{eq:Kaz2}
\Delta u =  (1-t)\,u + t \,e^u - a,\quad 0\le t\le 1 \, .
\end{equation}
When $t=0$ this reduces to $\Delta u = u - a$, or
$(-\Delta + 1)\,u = a$, which by Proposition \ref{prop:invLap} has
the unique solution $u = (-\Delta + 1)^{-1}a$. When $t=1$,
\eqref{eq:Kaz2} reduces to \eqref{eq:Kaz1}. 
We begin by using the Maximum Principle, 
Proposition \ref{prop:MaxPrin}, which implies an \emph{a priori} bound on
solutions of \eqref{eq:Kaz2}. (Compare the argument in
\cite[pp.\ 56--57]{Kazdan}.) Indeed, suppose $u$ satisfies
\eqref{eq:Kaz2} for some $0\le t\le 1$,
and let $[c,d]$ be the smallest closed interval containing
$\sigma(u)$. We may choose a state $\varphi$ of $A_\theta$ with
$\varphi(u)=d$, $\varphi(e^u)=e^d$, and by Proposition \ref{prop:MaxPrin},
$\varphi(\Delta u) \le 0$. Since 
\[
\varphi\bigl( (1-t)\,u + t \,e^u-a \bigr) =  (1-t)\,d + t \,e^d - \varphi(a)
\ge (1-t)\,d + t \,e^d - \Vert a \Vert, 
\]
we get a contradiction if $(1-t)\,d + t \,e^d - \Vert a \Vert
> 0$, which is the case if $d > \Vert a \Vert$. 
So $d\le \Vert a \Vert$.
Similarly, we may choose a state $\psi$ of $A_\theta$ with
$\psi(u)=c$, $\psi(e^u)=e^c$, and by Proposition \ref{prop:MaxPrin},
$\psi(\Delta u) \ge 0$. Since 
\[
\psi\bigl((1-t)\,u + t \,e^u - a \bigr) = (1-t)\,c + t \,e^c - \psi(a)
\le (1-t)\,c + t \,e^c - \frac{1}{\Vert a^{-1} \Vert}, 
\]
we get a contradiction 
if $e^c - \frac{1}{\Vert a^{-1} \Vert}
< 0$. Thus $e^c - \frac{1}{\Vert a^{-1} \Vert}\ge 0 $ and 
$c\ge -\ln \Vert a^{-1} \Vert$. In other words, we have
shown that any solution of \eqref{eq:Kaz2}, for any $0\le t\le 1$, 
satisfies the \emph{a priori} estimate
\begin{equation}
\label{eq:apriori}
-\ln \Vert a^{-1} \Vert \le u \le \Vert a \Vert \, .
\end{equation}

Now rewrite \eqref{eq:Kaz2} in the form
\[
u = (-\Delta + 1)^{-1}\bigl(a + t\,u - t\,e^u\bigr) \, .
\]
The right-hand side is well-defined and continuous in the
{\Ca} norm topology for $u = (A_\theta)_{\text{s.a.}}$, since
$(-\Delta + 1)^{-1}$ is bounded by Lemma \ref{lem:L1fromD}.
In fact, this Lemma also shows $(-\Delta + 1)^{-1}$ is bounded 
as a map $A_\theta \to \cA_\theta$, so as a map
$A_\theta \to A_\theta$, it is a limit of operators of finite rank, namely
the restrictions of the operator to the span of $\{U^mV^n : m^2+n^2\le N\}$,
as $N\to \infty$.  Thus $(-\Delta + 1)^{-1}$ is not only
bounded, but also compact. Together with the \emph{a priori} estimate
\eqref{eq:apriori} and the fact that there is a solution for $t=0$, 
this shows that \eqref{eq:Kaz2} satisfies the hypotheses of the 
Leray-Schauder Theorem. Hence \eqref{eq:Kaz2} has a solution for
all $t\in [0,\,1]$. Thus \eqref{eq:Kaz1} (which is the special
case of \eqref{eq:Kaz2} for $t=1$) has a solution in $\dom \Delta
\subseteq A_\theta$, and thus in $\cA_\theta$ by Lemma \ref{lem:L1fromD}.

The last step of the proof is ``elliptic regularity.'' In other words,
we need to show that a solution to \eqref{eq:Kaz1}, so far only known
to be in $\cA_\theta$, lies in $A_\theta^\infty$. Since $a\in
A_\theta^\infty$ and $\cA_\theta$ is closed under holomorphic functional
calculus (by Theorem \ref{thm:Wiener}), the right-hand side of \eqref{eq:Kaz1}
lies in $\cA_\theta$, i.e., has absolutely summable Fourier 
coefficients. Then \eqref{eq:Kaz1} implies that the Fourier coefficients
$c_{m,n}$ of $u$ have even faster decay, namely,
\[
\sum_{m,n} (1+m^2 + n^2) |c_{m,n}| < \infty \, .
\]

Now one can iterate this argument. This is a bit tricky, as at each
step one needs a new Banach subalgebra of $A_\theta$ to replace
$\cA$ (we drop the subscript $\theta$ for simplicity of notation), 
so we indicate how this works at the next step, and then
sketch how to proceed further. For $u\in \cA$ with Fourier
coefficients $c_{m,n}$, let 
\[
\Vert u\Vert_1 = \sum_{m,n} (2+m^2 + n^2) |c_{m,n}|\, ,
\]
assuming this converges. We have seen that we know $\Vert u\Vert_1
< \infty$. We claim that $\Vert \,\cdot\,\Vert_1$ is a Banach
$*$-algebra norm. This will follow by the argument in the proof of
Theorem \ref{thm:Wiener} if we can show that
\begin{multline*}
\sum_{p,q} (2 + p^2 + q^2) \sum_{m,n} |c_{m,n}|\, |d_{p-m,q-n}|
\le \\ \left( \sum_{m,n} (2+m^2 + n^2) |c_{m,n}| \right)
\left( \sum_{l,k} (2+l^2 + k^2) |d_{l,k}| \right)\, .
\end{multline*}
Comparing the two sides of this inequality, one sees it is equivalent
to proving that
\[
(2 + p^2 + q^2) \le (2+m^2 + n^2) (2+(p-m)^2 + (q-n)^2) \,,
\]
or with $\overrightarrow v = (m,n)$ and $\overrightarrow w = (p-m,q-n)$
vectors in Euclidean $2$-space, that
\[
\bigl( 2 + \Vert \overrightarrow v + \overrightarrow w \Vert^2 \bigr)
\le \bigl( 2 + \Vert \overrightarrow v \Vert^2 \bigr)
\bigl( 2 + \Vert \overrightarrow w \Vert^2 \bigr) \, .
\]
This inequality in turn follows from the standard inequality
\[
\Vert \overrightarrow v + \overrightarrow w \Vert^2 \le
\Vert \overrightarrow v  \Vert^2 + \Vert 
\overrightarrow w \Vert^2 + 2 \Vert \overrightarrow v  \Vert\cdot \Vert 
\overrightarrow w \Vert \le
2\bigl( \Vert \overrightarrow v  \Vert^2 + \Vert 
\overrightarrow w \Vert^2 \bigr)\, .
\]
This shows the completion of $A_\theta^\infty$ in the norm 
$\Vert \,\cdot\,\Vert_1$ is a Banach $*$-algebra $\cA_1$. Since
$u$ and $a$ are in $\cA_1$, so is $e^u-a$. By \eqref{eq:Kaz1} again,
$u$ has still more rapid decay; its Fourier coefficients satisfy
\[
\sum_{m,n} (m^2+n^2)^2 |c_{m,n}| < \infty\, .
\]
Now we iterate again using still another Banach $*$-algebra $\cA_2$
with the norm
\[
\Vert u\Vert_2 = \sum_{m,n} \bigl(8 + (m^2 + n^2)^2\bigr) |c_{m,n}|\, .
\]
Again one has to check that this is a Banach algebra norm, which will
follow from the inequalities
\[
\begin{aligned}
8 + \Vert \overrightarrow v + \overrightarrow w \Vert^4
&= 8 + \bigl(\Vert \overrightarrow v + \overrightarrow w \Vert^2\bigr)^2\\
&\le  8 + \bigl(2\bigl(\Vert \overrightarrow v\Vert^2 + 
\Vert \overrightarrow w \Vert^2\bigr)\bigr)^2\\
&\le 8 + 4 \bigl( \Vert \overrightarrow v \Vert^4 + 
\Vert \overrightarrow w \Vert^4 + 2 \Vert \overrightarrow v \Vert^2\cdot
\Vert \overrightarrow v \Vert^2\bigr)\\
&\le  8 + 4 \bigl( 2 \bigl(\Vert \overrightarrow v \Vert^4 + 
\Vert \overrightarrow w \Vert^4\bigr)\bigr)\\
&\le  \bigl(8 + \Vert \overrightarrow v \Vert^4\bigr)
\bigl(8 + \Vert \overrightarrow w \Vert^4\bigr)\, .
\end{aligned}
\]
Thus $\cA_2$ is a Banach algebra and $e^u-a\in \cA_2$, so that
$\Delta u\in \cA_2$ and
the Fourier coefficients of $u$ decay faster than $(m^2+n^2)^3$,
etc. Repeating in this way, we show by induction that $c_{m,n}$ is rapidly
decreasing, and thus that $u\in A_\theta^\infty$.
\end{proof}
\begin{proof}[Sketch of a second proof]
One could also approach this problem using ``variational methods.''
By the argument at the beginning of the proof of Theorem 
\ref{thm:nonlin}, \eqref{eq:Kaz1} is the {\EL} equation for critical
points of
\[
\cL(u) = E(u) + \tau(e^u - u\,a)
= E(u) + \tau(e^u - a^{\frac12} u a^{\frac12})\,.
\]
This functional is bounded below since $E(u)\ge 0$ and
$\tau(e^u  - a^{\frac12} u a^{\frac12})$ is bounded below
(by a constant depending only on $a$). Indeed, 
for $t$  and $\lambda > 0$ real, $e^t-\lambda t$ has a global
minimum at $t=\ln\lambda$, so  $e^t-\lambda t\ge \lambda(1 - \ln
\lambda)$.  If we write $u=u_+-u_-$ with
$u_+ u_- = u_- u_+ = 0$ and $u_+,\, u_- \ge 0$, then
\[ 
\begin{aligned}
-\tau(u\,a) &= \tau(u_- a)-\tau(u_+ a)\\
&= -\tau\bigl(u_+^{\frac12}au_+^{\frac12}\bigr)
 + \tau\bigl(u_-^{\frac12}au_-^{\frac12}\bigr)\\
&\ge -\tau\bigl(u_+^{\frac12} \Vert a\Vert u_+^{\frac12}\bigr) + 0\\
&= - \Vert a \Vert\, \tau(u_+)\,.
\end{aligned}
\]
On the other hand,
\[ 
\tau(e^u) = \tau\bigl(e^{u_+} + e^{-u_-} - 1\bigr)
\ge \tau\bigl(e^{u_+}\bigr) - 1,
\]
and thus
\[
\begin{aligned}
\tau(e^u  - u\,a) &\ge \tau\bigl(e^{u_+}\bigr)-  \Vert a \Vert\, \tau(u_+) 
- 1\\
&= \tau\bigl( e^{u_+} - \Vert a \Vert\,u_+ \Bigr) - 1\\ 
&\ge \Vert a \Vert\,\bigl( 1 - \ln \Vert a \Vert\bigr) - 1\,.
\end{aligned}
\]
So we will show that
$\cL$ must have a minimum point, which will be a solution of
\eqref{eq:Kaz1}.

Choose $u_n=u_n^* \in A_\theta^\infty$ with $\cL(u_n)$ decreasing to
$\inf\bigl\{\cL(u) : u\in \bigl(A_\theta^\infty\bigr)_{\text{s.a.}}
\bigr\}$. Since $E$ and $\tau(e^u  - a^{\frac12} u a^{\frac12})$ are
separately bounded below, $E(u_n)$ must remain bounded. That means that
$\Vert \delta_j(u_n)\Vert_{\ell^2}$ remains bounded for $j=1,\,2$. 

We can also assume that $\Vert u_n\Vert_{\ell^2}$ remains bounded.
To see this, it is easiest to use a trick (cf.\ \cite[pp.\ 56--57]{Kazdan}).
Because of the \emph{a priori} bound on solutions of \eqref{eq:Kaz1}
coming from the Maximum Principle (see the first proof above),
we can modify the function $e^u$ on the right-hand side of the
equation and replace it by some $C^\infty$ function that grows linearly
for $u\ge \Vert a\Vert + 1$ and decays linearly for $u\le -1
-\ln \Vert a\Vert$.  (This does not affect the Maximum
Principle argument, so the solutions of the modified equation are
the same as for the original one.)  This has the effect of changing
the term $\tau(e^u)$ in the formula for $\cL$ to something that
outside of a finite interval behaves like a constant times
$\tau(u^2)$, which is $\Vert u\Vert_{\ell^2}^2$.

Thus we can assume our minimizing sequence $u_n$ is bounded in
the Sobolev space $H^1$. Since the unit ball of a Hilbert space
is weakly compact, after passing to a subsequence, we can 
assume that $u_n$ converges
weakly in the Hilbert space $H^1$, and by Theorem
\ref{thm:SobolevEmbedding}, strongly in $H^0=L^2$,
to some $u\in H^1$ which is a minimizer for $\cL$. 
(Compare the argument in \cite[Theorem 5.2]{Kazdan}.)
This $u$ is a ``weak solution'' of our equation
and we just need to show it is smooth, i.e., corresponds to a genuine
element of $A_\theta^\infty$. This requires an ``elliptic
regularity'' argument similar to the one in the first proof.
\end{proof}

\section{Harmonic unitaries}
\label{sec:unitaries}

In this section, we discuss the noncommutative analogue of the
classical problem of studying harmonic maps $M\to S^1$, where $M$
is a compact Riemannian manifold and $S^1$ is given its usual metric.
This problem was studied and solved in \cite[pp.\
  128--129]{MR0164306}. The homotopy classes of maps $M\to S^1$ are
classified by $H^1(M,\bZ)$. For each homotopy class in $H^1(M,\bZ)$,
we can think of it as an integral class in
$H^1(M,\bR)$, and represent it (by the de Rham and Hodge Theorems) by
a unique harmonic $1$-form with integral periods. Integrating this
$1$-form gives a harmonic map $M\to S^1$ in the given homotopy
class.  This map is not quite unique since we can compose with an
isometry (rotation) of the circle, but except for this we have
uniqueness.  (This follows from \cite[Proposition, p.\
123]{MR0164306}.)

If we dualize a map $M\to S^1$, we obtain a unital $*$-homomorphism
$C(S^1)\to C(M)$, which since $C(S^1)$ is the universal {\Ca} on a
single unitary generator, is basically the same as a choice of a
unitary element $u\in C(M)$. This analysis suggests that the
noncommutative analogue of a harmonic map to $S^1$
should be a ``harmonic'' unitary in a
noncommutative  {\Ca} $A$. Each unitary in $A$ defines a class in the
topological $K$-theory group $K_1(A)$, and for $A$ a unital {\Ca},
every $K_1$ class is represented by a unitary in $M_n(A)$ for some
$n$, so since we can replace $A$ by $M_n(A)$, the natural problem is
to search for a harmonic representative in a given connected component
of $U(A)$ (or, passing to the stable limit, in a given $K_1$ class). 

The next level of complexity up from the case where $A=C(M)$ is
commutative is the case where $A=C(M,M_n(\bC))$ for some $n$. In this
case, a unitary in $U(A)$ is the same thing as a map $M\to U(n)$, and
a harmonic unitary should be the same thing as a harmonic map $M\to
U(n)$. For example, suppose $M=S^3$ and $n=2$.
Since there are no maps $M\to S^1$ which are not homotopic to a
constant, it is natural to look first at smooth  maps $f\co S^3\to
U(2)$ with $\det \circ f\co S^3\to \bT$ identically equal to $1$,
i.e., to look at maps $f\co S^3\to SU(2)=S^3$, with both copies of
$S^3$ equipped with the standard  ``round'' metric. This problem is
treated in \cite[Proposition, pp.\ 129--131]{MR0164306}. For example,
the identity map $S^3\to S^3 = SU(2) \hookrightarrow U(2)$ is a
harmonic map representing the generator of $K_1(A)= K^{-1}(S^3)$.
The study of harmonic maps  in other homotopy classes, even just in
the simple case of $S^3\to S^3$, is a complicated issue
(see, e.g., \cite[Proposition, pp.\ 129--131]{MR0164306} and
\cite{MR762354}); however, this is quite tangential to the main theme
of this article, so we won't consider it further.

Instead, we consider now the notion of harmonic unitaries in the case
of $A_\theta$. Recall first that $K_1(A_\theta)\cong \bZ^2$, with $U$ and
$V$ as generators \cite[Corollary 2.5]{MR587369}, and that the
canonical map $U(A_\theta) / 
U(A_\theta)_0 \to K_1(A_\theta)$ is an isomorphism \cite{MR887221}.
\begin{definition}
\label{def:harmunitary}
If $u\in A_\theta^\infty$ is unitary, we define the \emph{energy} of
$u$ to be
\[
E(u) = \frac12 \,\tau\bigl( (\delta_1(u))^*\delta_1(u) +
(\delta_2(u))^*\delta_2(u)\bigr) \,.
\]
Obviously this is constructed so as to be $\ge 0$. This definition
also coincides with the energy defined in Definition \ref{def:energy1}, 
provided we insert the appropriate $*$'s in the latter (which we
can do without changing anything since there we were taking $u$ to 
be self-adjoint).
The unitary $u$ is called \emph{harmonic} if it is a critical point
for $E\co U(A_\theta^\infty)\to [0,\infty)$. By the discussion above,
a harmonic unitary is the noncommutative analogue of a harmonic
circle-valued function on a manifold.
\end{definition}
\begin{remark}
\label{rem:nonunique}
Note that in Definition \ref{def:harmunitary}, $E(u)$ is
invariant under multiplication of $u$ by a scalar $\lambda\in \bT$.
Thus $E$ descends to a functional on the \emph{projective} unitary
group $PU(A_\theta^\infty)$ and any sort of uniqueness result for
harmonic unitaries can only be up to multiplication of $u$ by a 
scalar $\lambda\in \bT$. This is analogous to what happens in the
case of harmonic maps $M\to \bT$, where the associated harmonic
$1$-form is unique but the map itself is only defined up to a 
``constant of integration.''
\end{remark}
\begin{theorem}
\label{thm:ELunitary}
If $u\in A_\theta^\infty$ is unitary, then $u$ is harmonic if and only
if it satisfies the {\EL} equation
\begin{equation}
\label{eq:ELunitary}
u^*(\Delta u) + (\delta_1(u))^*\,\delta_1(u) +
(\delta_2(u))^*\,\delta_2(u) = 0\, .
\end{equation}
Note that this equation is ``elliptic'' {\lp}if we drop lower-order
terms, it reduces to Laplace's equation $\Delta u = 0${\rp}, but
highly nonlinear.
\end{theorem}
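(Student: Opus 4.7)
The space $U(A_\theta^\infty)$ is not a linear subspace, so to derive an Euler-Lagrange equation I cannot simply perturb $u$ by arbitrary test elements in $A_\theta^\infty$. The natural parameterization of the tangent space at $u$ is $u_t = u\, e^{ith}$ with $h = h^* \in A_\theta^\infty$; as $h$ varies, these sweep out all smooth one-parameter families of unitaries through $u$ at $t=0$, and criticality with respect to all such variations is equivalent to criticality in the usual sense. The plan is to compute $\tfrac{d}{dt}\big|_{t=0} E(u_t)$, use cyclicity of $\tau$ and Lemma \ref{lem:intbyparts} to reduce the result to a trace pairing $\tau(h\cdot X(u))$ for a specific $X(u)$, and then invoke non-degeneracy of the trace pairing.

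First I observe that $\tfrac{d}{dt}\big|_{t=0}\delta_j(u\,e^{ith}) = i\,\delta_j(u)\,h + i\,u\,\delta_j(h)$, since $\delta_j(e^{ith})$ vanishes at $t=0$ and its first-order change is $i\,\delta_j(h)$. Differentiating $\tau((\delta_j u_t)^*\delta_j u_t)$ by the product rule produces four terms per $j$; two of them (the ones in which $h$ appears sandwiched between $(\delta_j u)^*$ and $\delta_j u$) cancel against each other after one application of cyclicity of $\tau$. What remains can be rearranged, again by cyclicity, into
\[
\frac{d}{dt}\Big|_{t=0} E(u_t) \;=\; i \sum_{j=1}^{2} \tau\bigl(\delta_j(h)\bigl[(\delta_j u)^*\,u - u^*\,\delta_j u\bigr]\bigr).
\]
Now I apply Lemma \ref{lem:intbyparts} to move $\delta_j$ onto the bracket. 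Using Leibniz and the fact that $\delta_j$ commutes with $*$, the cross terms collapse:
\[
\delta_j\bigl[(\delta_j u)^*\,u - u^*\,\delta_j u\bigr] \;=\; (\delta_j^2 u)^*\,u - u^*\,\delta_j^2 u.
\]
Summing over $j$, the critical point condition becomes $\tau\bigl(h\,[(\Delta u)^* u - u^*\Delta u]\bigr)=0$ for every $h=h^*$. Since the bracket $(\Delta u)^* u - u^*\Delta u$ is visibly anti-self-adjoint, non-degeneracy of the trace pairing forces the intermediate Euler-Lagrange identity $(\Delta u)^* u = u^*\Delta u$.

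To recover the form \eqref{eq:ELunitary} stated in the theorem I invoke the unitarity constraint $u^* u = 1$. Applying $\delta_j$ twice (and using $\delta_j(u^*)=(\delta_j u)^*$) gives $(\delta_j^2 u)^* u + 2(\delta_j u)^*(\delta_j u) + u^*\,\delta_j^2 u = 0$, and summing over $j$ yields
\[
(\Delta u)^* u + u^*\Delta u \;=\; -2\bigl[(\delta_1 u)^*\,\delta_1 u + (\delta_2 u)^*\,\delta_2 u\bigr].
\]
Combined with $(\Delta u)^* u = u^*\Delta u$ this halves to exactly \eqref{eq:ELunitary}; conversely, \eqref{eq:ELunitary} together with its adjoint (whose right side is already self-adjoint) forces $(\Delta u)^* u = u^*\Delta u$, proving the two formulations are equivalent for unitary $u$.

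The main obstacle is not any deep analytic difficulty but rather disciplined bookkeeping: one must write the variation multiplicatively rather than additively to stay on the unitary group, exploit trace cyclicity to collapse the purely ``bulk'' contributions, and then recognize that the unitarity constraint is precisely the algebraic identity that converts the terse EL equation $(\Delta u)^* u = u^*\Delta u$ into the more symmetric and manifestly ``elliptic plus lower order'' form appearing in the statement.
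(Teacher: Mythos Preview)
Your proof is correct and follows essentially the same strategy as the paper: parameterize nearby unitaries multiplicatively as $u\,e^{ith}$, compute the first variation, use trace cyclicity and Lemma~\ref{lem:intbyparts}, and invoke non-degeneracy of the trace pairing. The only difference is cosmetic: the paper uses the skew-adjointness of $u^*\delta_j(u)$ \emph{before} integrating by parts, so that after the Leibniz expansion of $\delta_j\bigl(u^*\delta_j(u)\bigr)$ the stated equation \eqref{eq:ELunitary} falls out directly; you instead keep the combination $(\delta_j u)^*u - u^*\delta_j u$, obtain the intermediate condition $(\Delta u)^*u = u^*\Delta u$, and then appeal to the twice-differentiated constraint $u^*u=1$ to reach the same conclusion. (A trivial slip: your displayed formula for $\tfrac{d}{dt}\big|_{t=0}E(u_t)$ is missing the factor $\tfrac12$, but this is irrelevant once you set it to zero.)
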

\begin{proof}
First note that for $u$ unitary, since $u\,u^* = u^* u = 1$,
we have
\[
\delta_j(u)\,u^* + u\,(\delta_j(u))^* = (\delta_j(u))^*\,u
+ u^*\, \delta_j(u) = 0\, ,
\]
$j=1,\,2$. If $u$ is unitary, then any nearby
unitary is of the form $ue^{ith}$, $h=h^*$, and
\[
\begin{aligned}
\left.\frac{d}{dt}\right\vert_{t=0} E(ue^{ith})&= \frac12\,
\tau\Bigl(-i\delta_1(h)u^*\delta_1(u) + i \delta_1(u)^*u \delta_1(h)\\
&\quad + \text{similar expression with }\delta_2\Bigr)\,.
\end{aligned}
\]
We can use the trace property to move all the $\delta_j(h)$'s to
the front. So $u$ is a critical point if and only if for
all $h=h^*$,
\begin{equation}
\label{eq:ELLaplace}
\tau\Bigl(\delta_1(h)\Im\bigl(u^*\delta_1(u)\bigr)
+ \delta_2(h)\Im\bigl(u^*\delta_2(u)\bigr)\Bigr) = 0\,.
\end{equation}
In \eqref{eq:ELLaplace}, the $\Im$'s can be omitted since
we have seen that $u$ unitary $\Rightarrow$ $\delta_j(u)^*u$ skew-adjoint.
Thus $u$ is harmonic if and only if
\[
\tau\Bigl(\delta_1(h)\,\bigl(u^*\delta_1(u)\bigr)
+ \delta_2(h)\,\bigl(u^*\delta_2(u)\bigr)\Bigr) = 0
\]
for all $h=h^*$ in $A_\theta^\infty$.
Now apply integration by parts (Lemma \ref{lem:intbyparts}). We
see that $u$ is harmonic if and only if
\[
\tau\Bigl(h\,\delta_1\bigl(u^*\delta_1(u)\bigr)
+ h\,\delta_2\bigl(u^*\delta_2(u)\bigr)\Bigr) = 0
\]
for all $h=h^*$ in $A_\theta^\infty$. Since the trace pairing is
nondegenerate, the Theorem follows.
\end{proof}

It seems natural to make the following conjecture:
\begin{conjecture}
\label{conj:harmu}
In each connected component of $PU(A_\theta^\infty)$, the functional
$E$ has a unique minimum, given by scalar multiples of $U^nV^m$.
These are the only harmonic unitaries in this component.
\end{conjecture}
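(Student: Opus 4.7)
The plan is to first settle the minimization part of the conjecture in full, and then indicate what remains (and the main obstruction) for the \emph{only harmonic unitaries} assertion.

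I would begin by identifying the connected components of $PU(A_\theta^\infty)$ with $K_1(A_\theta) \cong \bZ^2$ (generated by $[U]$ and $[V]$), so that the component containing $U^nV^m$ consists of unitaries $u$ with $[u] = n[U] + m[V]$. A direct computation using $\delta_1(U^nV^m) = 2\pi i n\,U^nV^m$ and $\delta_2(U^nV^m) = 2\pi i m\,U^nV^m$ confirms that $U^nV^m$ satisfies the \EL{} equation \eqref{eq:ELunitary} and has $E(U^nV^m) = 2\pi^2(n^2+m^2)$.

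The heart of the minimization argument is a decomposition. Set $W := U^nV^m$ and, for $u$ in the component of $W$, let $w := W^*u$; then $w \in U(A_\theta^\infty)_0$. Expanding with the Leibniz rule, using $\delta_jW = 2\pi i n_j W$ (where $n_1=n$, $n_2=m$), and applying Lemma \ref{lem:intbyparts} to collapse the cross terms, I obtain
\[
E(u) \,=\, 2\pi^2(n^2+m^2) \,-\, 2\pi i\,\bigl(n\,\tau(w^*\delta_1w) + m\,\tau(w^*\delta_2w)\bigr) \,+\, E(w).
\]
The functionals $v\mapsto \tfrac{1}{2\pi i}\tau(v^*\delta_jv)$ are homotopy invariants on $U(A_\theta^\infty)$: differentiating $\tau(u_t^*\delta_ju_t)$ along a smooth path of unitaries, and using $\tfrac{d}{dt}u_t^* = -u_t^*\dot u_t u_t^*$ together with Lemma \ref{lem:intbyparts} and cyclicity of $\tau$, one checks the derivative vanishes identically. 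Since $[w]=0$ these invariants vanish on $w$, so the decomposition simplifies to $E(u) = E(W) + E(w) \ge E(W)$. Equality forces $E(w)=0$, and by faithfulness of $\tau$ this in turn forces $\delta_1w = \delta_2w = 0$, hence $w\in\bT$. So the minimizers of $E$ in this component are exactly the $\lambda W$ with $\lambda\in\bT$, and they are indeed harmonic by the first paragraph.

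For the stronger statement that these are the \emph{only} harmonic unitaries in the component, the diffeomorphism $u\mapsto W^*u$ identifies variations of $u$ with variations of $w$ and so carries critical points of $E$ on the component of $W$ bijectively onto critical points on $U(A_\theta^\infty)_0$. It therefore suffices to prove that every harmonic unitary in the identity component is a constant in $\bT$. Setting $\omega_j := u^*\delta_j u$ (skew-adjoint, as noted in the proof of Theorem \ref{thm:ELunitary}), one finds that \eqref{eq:ELunitary} reduces to the Coulomb-gauge condition $\delta_1\omega_1 + \delta_2\omega_2 = 0$, while equality of mixed partial derivatives of $u$ produces the noncommutative Maurer--Cartan identity $\delta_2\omega_1 - \delta_1\omega_2 = [\omega_1,\omega_2]$. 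The natural strategy is to show that $E$ is geodesically convex on $U(A_\theta^\infty)_0$ (with respect to the one-parameter subgroups $w\mapsto we^{ith}$), so that every critical point is a global minimum, and then apply the previous paragraph. The main obstacle is precisely the commutator $[\omega_1,\omega_2]$: in the commutative limit it vanishes and one recovers Hodge-theoretic uniqueness of harmonic representatives, but in the genuinely noncommutative case it obstructs any direct linearization, and a positivity argument for the second variation of $E$ at a general harmonic $u$ does not appear to follow from the linear theory developed in Section \ref{sec:lin} or from the Sobolev machinery of Section \ref{sec:Sobolev}.
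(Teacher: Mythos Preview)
This statement is a \emph{conjecture} in the paper, not a theorem; the author explicitly says he has ``not been able to prove'' it and offers only the partial result Theorem~\ref{thm:NClapAtheta}, which shows via a second-variation computation that the $U^mV^n$ are harmonic and are strict \emph{local} minima for $E$.

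Your argument for the minimization half is correct and actually goes further than the paper. The decomposition $E(u)=E(W)+E(w)$ with $W=U^nV^m$ and $w=W^*u$ --- obtained after killing the cross terms by the homotopy invariance of $w\mapsto\tfrac{1}{2\pi i}\tau(w^*\delta_j w)$, which you verify correctly --- shows that $W$ is a \emph{global} minimum on its component, unique up to a scalar in $\bT$. This is a genuine strengthening of the paper's local-minimum argument. The homotopy-invariance step is the key idea missing from the paper; it amounts to recognizing $\tfrac{1}{2\pi i}\tau(w^*\delta_j w)$ as the pairing of the basic cyclic $1$-cocycle with $[w]\in K_1$, hence zero on the identity component.

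For the ``only harmonic unitaries'' half you do not claim a proof, and neither does the paper. Your reduction of this question to the identity component (the identity $E(u)=E(W)+E(w)$ also carries critical points to critical points, since $E(W)$ is constant) is a clean simplification not made explicit in the paper, and your identification of the commutator $[\omega_1,\omega_2]$ in the Maurer--Cartan relation as the essential obstruction is accurate. On this point you and the author are at the same place, with your framing arguably sharper.
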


Unfortunately, because of the complicated nonlinearity in
\eqref{eq:ELunitary}, plus complications coming from noncommutativity,
we have not been able to prove the Conjecture
\ref{conj:harmu}. However, we have the following partial result.
In particular, we see that every connected component
in $U(A_\theta^\infty)$ contains a harmonic unitary which is
energy-minimizing.
\begin{theorem}
\label{thm:NClapAtheta}
The scalar multiples of $U^mV^n$ are harmonic and are strict 
local minima for $E$. Any harmonic unitary $u$ depending on $U$ alone
is a scalar multiple of a power of $U$. Similarly, any harmonic unitary 
$u$ depending on $V$ alone is a scalar multiple of a power of $V$. 
\end{theorem}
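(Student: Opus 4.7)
The plan has two essentially independent parts: verifying that the unit multiples of $U^m V^n$ are harmonic and are strict local minima for $E$, and then proving the rigidity of harmonic unitaries in $C^*(U)$ (and symmetrically in $C^*(V)$).

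For the first part, take $u_0 = U^m V^n$. Direct substitution gives $\delta_1(u_0) = 2\pi i m\, u_0$, $\delta_2(u_0) = 2\pi i n\, u_0$, and $\Delta u_0 = -4\pi^2(m^2+n^2)\, u_0$, so the left-hand side of \eqref{eq:ELunitary} equals $\bigl(-4\pi^2(m^2+n^2)+4\pi^2 m^2+4\pi^2 n^2\bigr)\cdot 1 = 0$; hence $u_0$ is harmonic. To establish strict local minimality of $E$ on $PU(A_\theta^\infty)$ (see Remark \ref{rem:nonunique}), I will parametrize nearby unitaries as $u_0 e^{ith}$ with $h=h^*\in A_\theta^\infty$ and compute the second variation. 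Using $\delta_j(u_0 e^{ith}) = u_0\bigl(2\pi i\alpha_j e^{ith}+\delta_j(e^{ith})\bigr)$ (with $\alpha_1=m$, $\alpha_2=n$) and the identity $\delta_j(e^{-ith})e^{ith}+e^{-ith}\delta_j(e^{ith})=0$, the cyclic property of $\tau$ gives
\[
\tau\bigl(\delta_j(u_0 e^{ith})^*\delta_j(u_0 e^{ith})\bigr) = 4\pi^2\alpha_j^2 - 4\pi i\alpha_j\,\tau\bigl(e^{-ith}\delta_j(e^{ith})\bigr) + \tau\bigl(\delta_j(e^{-ith})\delta_j(e^{ith})\bigr).
\]
A power-series expansion of $e^{\pm ith}$ together with $\tau\circ\delta_j=0$ and the cyclicity identity $\tau([h,\delta_j(h)])=0$ show the cross term is $O(t^3)$, while the third term is $t^2\tau(\delta_j(h)^2)+O(t^3)$. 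Summing over $j$ yields
\[
E(u_0 e^{ith}) - E(u_0) = \tfrac{t^2}{2}\,\tau\bigl(\delta_1(h)^2+\delta_2(h)^2\bigr) + O(t^3).
\]
By Proposition \ref{prop:classEL}, the coefficient of $t^2$ is strictly positive unless $h$ is a real scalar multiple of $1$, in which case $u_0 e^{ith}$ differs from $u_0$ only by a scalar in $\bT$. Hence the Hessian is positive definite on the tangent space of $PU(A_\theta^\infty)$ at $[u_0]$, yielding strict local minimality.

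For the rigidity statement, suppose $u\in C^*(U)\cap A_\theta^\infty$ is a harmonic unitary, so $\delta_2(u)=0$ and \eqref{eq:ELunitary} simplifies to $u^*\delta_1^2(u)+\delta_1(u)^*\delta_1(u)=0$. Because $\delta_1$ commutes with the adjoint and is a derivation, the left-hand side equals $\delta_1\bigl(u^*\delta_1(u)\bigr)$, so $u^*\delta_1(u)\in C^*(U)\cap A_\theta^\infty$ lies in $\ker \delta_1$. Since $\delta_1(U^k)=2\pi i k U^k$, the restriction of $\delta_1$ to $C^*(U)\cap A_\theta^\infty$ has kernel $\bC\cdot 1$, so $u^*\delta_1(u)$ is a scalar; moreover it is skew-adjoint since differentiating $u^*u=1$ gives $u^*\delta_1(u)+\delta_1(u)^*u=0$. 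Thus $u^*\delta_1(u)=i\alpha\cdot 1$ with $\alpha\in\bR$, equivalently $\delta_1(u)=i\alpha u$. Writing $u=\sum_k c_k U^k$ forces $2\pi k c_k=\alpha c_k$ for every $k$, so $c_k=0$ except for $k=\alpha/(2\pi)$, which must therefore be an integer $m$; unitarity then forces $|c_m|=1$, so $u=\lambda U^m$ with $\lambda\in\bT$. The $V$-only case is identical with $(\delta_1,U)$ and $(\delta_2,V)$ interchanged.

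The main bookkeeping obstacle is the second-variation calculation, specifically verifying that the cross term $-4\pi i\alpha_j\,\tau(e^{-ith}\delta_j(e^{ith}))$ contributes only $O(t^3)$: this uses $\tau\circ\delta_j=0$ at order $t$ and the cyclicity identity $\tau([h,\delta_j(h)])=0$ at order $t^2$. After that, the proof reduces to Proposition \ref{prop:classEL} together with elementary Fourier arithmetic inside the commutative subalgebras $C^*(U)$ and $C^*(V)$.
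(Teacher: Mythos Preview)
Your proof is correct and follows essentially the same approach as the paper's: both verify harmonicity of $U^mV^n$ by direct substitution into \eqref{eq:ELunitary}, compute the second variation $E(u_0e^{ith})=E(u_0)+t^2E(h)+O(t^3)$ by exploiting that $u_0$ is a joint eigenvector for $\delta_1,\delta_2$, and handle the $U$-only (resp.\ $V$-only) case by showing $u^*\delta_1(u)$ is a scalar. Your observation that the Euler--Lagrange equation in the $U$-only case is literally $\delta_1\bigl(u^*\delta_1(u)\bigr)=0$ is a slight streamlining of the paper's argument, which instead returns to the weak form $\tau(\delta_1(h)\,\delta_1(u)^*u)=0$ and uses nondegeneracy of the trace pairing on the range of $\delta_1$; the content is the same.
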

\begin{proof}
First suppose $u$ depends on $U$ alone. Then
$\delta_2(u)=0$. So by the proof of Theorem \ref{thm:ELunitary},
if $u$ is harmonic, then 
$\tau\Bigl(\delta_1(h)\cdot\delta_1(u)^*u\Bigr)=0$
$\forall h=h^*$. This must also hold for general $h$ (not necessarily
self-adjoint) since we can split $h$ into its self-adjoint and
skew-adjoint parts. Since the range of $\delta_1$ contains $U^m$
unless $m=0$, $\tau(\delta_1(u)^*u\,U^m)$=0 for $m\ne 0$, which
means (since $\delta_1(u)^*u$ depends only on $U$) that
$\delta_1(u)^*u$ is a scalar.  Thus $u$ is an
eigenfunction for $\delta_1$ and so $u=e^{i\lambda}U^m$ for some $m$.
The case where $u$ depends on $V$ alone is obviously similar.

Next let's examine $u=U^mV^n$. Since $E(U^mV^n) = 2\pi^2 (m^2+n^2)$
while 
\[
(U^mV^n)^*\Delta(U^mV^n) = -4\pi^2 (m^2+n^2) \, , 
\]
$u$ satisfies
\eqref{eq:ELunitary} and is therefore harmonic. We show it is a 
local minimum for $E$; in fact, the minimum is strict once we pass
to $PU(A_\theta^\infty)$.  We expand $\delta_j(ue^{ith})$, with $h=h^*$,
out to second order in $t$. Note that with $\delta=\delta_1$ or $\delta_2$,
\[
\delta(ue^{ith}) =
\delta(u) + i t\,\bigl[ \delta(u)h + u\delta(h) \bigr]
- \frac{t^2}{2}\,\bigl[ \delta(u)\,h^2 + u\,\delta(h)\,h + u\,h\,\delta(h)
\bigr] + O(t^3).
\]

We substitute this into the formula for $E(ue^{ith})$.
The terms linear in $t$ cancel since $u$ is harmonic, and we find that
\[
\begin{aligned}
E\bigl(u\,e^{ith}\bigr) &= 2\pi^2 (m^2+n^2) + 
t^2\,\tau\bigl[ (\delta_1(u)h + u\delta_1(h) )^*(\delta_1(u)h 
+ u\delta_1(h) ) \\
&\qquad
- \frac{1}{2}\delta_1(u)^*(\delta_1(u)\,h^2 + u\,\delta_1(h)\,h + 
u\,h\,\delta_1(h))\\
&\qquad
- \frac{1}{2}(h^2\,\delta_1(u)^* + h\,\delta_1(h)\,u^* + 
\delta_1(h)\,h\,u^*)\,\delta_1(u)\\
&\qquad + \text{similar expressions with }\delta_2\Bigr]+ O(t^3)\,.
\end{aligned}
\]
This actually simplifies considerably since $u$ is an eigenvector
for both $\delta_1$ and $\delta_2$, so that $\delta_j(u)^*\delta_j(u)$,
$\delta_j(u)^*u$, and $u^*\delta_j(u)$ are all scalars.
It turns out that almost everything cancels and one gets
\[
\begin{aligned}
E(u e^{ih t})&=
2\pi^2(m^2+n^2) + \frac12
t^2\tau\Bigl(\delta_1(h)^2+\delta_2(h)^2\Bigr) + O(t^3)\\
&= 2\pi^2(m^2+n^2) + {t^2}\, E(h) + O(t^3)\, .
\end{aligned}
\]
By Proposition \ref{prop:classEL}, the term in $t^2$
vanishes exactly when $h$ is a 
constant, and in that case $E(u e^{ih t}) = E(u) = 2\pi^2
(m^2+n^2)$ (exactly).
Otherwise, the coefficient of $t^2$ is strictly positive and $E(u e^{ih t})$
has a strict local minimum at $t=0$.
\end{proof}

\section{The Laplacian and holomorphic geometry}
\label{sec:holo}

As we have seen, $\Delta$ on $A_\theta$ behaves very much like the
classical Laplacian on $\bT^2$. But the Laplacian in (real) dimension $2$
is very closely related to holomorphic geometry in complex dimension
$1$. That suggests that the theory we have developed should be closely
related to the Cauchy-Riemann operators $\partial$ and $\dbar$
on ``noncommutative elliptic curves,'' as developed in references
like \cite{MR1977884,MR2054986}.

In classical analysis (in one complex variable), one usually sets
$\dbar = \frac12 \bigl(\frac{\partial}{\partial x_1} + i\,
\frac{\partial}{\partial x_2} \bigr)$, the Cauchy-Riemann operator,
with $\partial$ its complex conjugate. Then $\Delta = 4\, \partial\,
\dbar$. In our situation, the obvious analogue is to set $\dbar =
\frac12 \bigl(\delta_1 + i\,\delta_2\bigr)$.\footnote{We could also
  study different conformal structures on the torus, by changing the
  $i$ here to another complex number in the upper half-plane, but for
  the problems we will study here, this makes no essential difference.} 
Comparable to Proposition
\ref{prop:imLap} is:
\begin{proposition}
\label{prop:imdbar}
The operator $\dbar\co A_\theta^\infty\to A_\theta^\infty$ has kernel
given by scalar multiples of the identity, and restricts to a
bijection on $\ker\tau$.
\end{proposition}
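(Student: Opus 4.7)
The plan is to mimic the proof of Proposition \ref{prop:imLap}, replacing the eigenvalues $-4\pi^2(m^2+n^2)$ of $\Delta$ with the eigenvalues of $\dbar$. First I would compute, for $a=\sum_{m,n} c_{m,n}U^mV^n \in A_\theta^\infty$, that
\[
\dbar(a) = \tfrac12\bigl(\delta_1+i\delta_2\bigr)(a) = \sum_{m,n} \pi(im-n)\, c_{m,n}\, U^mV^n,
\]
so $\dbar$ is diagonal in the Fourier basis with eigenvalue $\pi(im-n)$ on $U^mV^n$. This eigenvalue vanishes if and only if $(m,n)=(0,0)$, which immediately gives $\ker\dbar = \bC\cdot 1$.

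For the image, first observe that $\dbar$ lands in $\ker\tau$: since $\tau\circ\delta_j = 0$ for $j=1,2$, we have $\tau(\dbar(a)) = 0$ for every $a\in A_\theta^\infty$. Combined with the kernel computation, this shows $\dbar$ restricted to $A_\theta^\infty \cap \ker\tau$ is injective. For surjectivity onto $A_\theta^\infty \cap \ker\tau$, I would take an arbitrary $a=\sum_{m,n} d_{m,n} U^mV^n$ in $A_\theta^\infty\cap \ker\tau$ (so $d_{0,0}=0$ and the $d_{m,n}$ are rapidly decreasing) and produce a preimage by the formula
\[
b \;=\; \sum_{m,n}{}^\prime \frac{d_{m,n}}{\pi(im-n)}\, U^mV^n,
\]
where the prime indicates omission of the $(0,0)$ term.

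The only thing to check is that $b\in A_\theta^\infty$, i.e., that its Fourier coefficients are rapidly decreasing. This is where one uses the key estimate $|im-n|^2 = m^2+n^2 \ge 1$ for $(m,n)\in\bZ^2\smallsetminus\{(0,0)\}$, so that dividing the rapidly decreasing sequence $\{d_{m,n}\}$ by $\pi(im-n)$ yields another rapidly decreasing sequence. Since by construction $\tau(b)=0$ and $\dbar(b)=a$, this completes the proof of surjectivity onto $A_\theta^\infty\cap \ker\tau$.

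I do not expect any real obstacle: unlike the analogous complex-variable statement on a general domain, the noncommutative-torus setting reduces the question entirely to a diagonal calculation on Fourier modes, and the crucial fact that the eigenvalues $\pi(im-n)$ are bounded away from $0$ on $\bZ^2\smallsetminus\{(0,0)\}$ makes the inversion automatic. The proof is essentially a verbatim rerun of Proposition \ref{prop:imLap} with the eigenvalues changed, and takes only a few lines.
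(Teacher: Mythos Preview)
Your proof is correct and is essentially identical to the paper's: both diagonalize $\dbar$ on the Fourier basis, observing that $\dbar(U^mV^n)=\pi(im-n)U^mV^n=\pi i(m+in)U^mV^n$ vanishes only at $(m,n)=(0,0)$, and then invert on $\ker\tau$ using that rapidly decreasing coefficients divided by $\pi(im-n)$ remain rapidly decreasing. The paper compresses this into a single sentence, but the content is the same.
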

\begin{proof}
Immediate from the fact that if $a=\sum_{m,n}c_{m,n}U^m V^n$, then
\[\dbar a = \pi\,i \sum_{m,n}(m+in)\,c_{m,n}U^m V^n\, ,
\]
together with the characterization of elements of $A_\theta^\infty$ in
terms of rapidly decreasing Fourier series.
\end{proof}
Thus the noncommutative torus admits no nontrivial global
``holomorphic functions.'' This is not surprising since a compact
complex manifold admits no nonconstant global
holomorphic functions.  However, assuming $\tau(f)=0$, we can solve the
\emph{inhomogeneous Cauchy-Riemann equation} $\dbar u = f$, which
in the classical case is related to the proof of the Mittag-Leffler
Theorem (see, for example, \cite[Ch.\ 1]{MR1045639}). 

In some situations, one is led to the more complicated equation
(similar to the above but with $\dbar$ replaced by the ``logarithmic
Cauchy-Riemann operator'') $(\dbar u)\,u^{-1} = f$, which we can rewrite
as $\dbar u = f u$. This equation was already studied 
(modulo a change of conventions about whether one should multiply on
the left or the right) in a (different) noncommutative context in
\cite{MR1062964}, and then in \cite{MR2247860}.
\begin{theorem}[{Polishchuk, \cite{MR2247860}}]
\label{thm:Cousin}
Let $f\in A_\theta$. Then the equation $\dbar u = fu$ has a
nonzero solution if and only if $\tau(f)\in \pi i(\bZ+i\bZ)$.
\end{theorem}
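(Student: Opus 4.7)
Suppose $u \in A_\theta$, $u \ne 0$, satisfies $\dbar u = fu$. My first task would be to show $u$ is invertible in $A_\theta$ --- the noncommutative analogue of the classical statement that a holomorphic section of a twisted line bundle on a connected complex manifold is either identically zero or nowhere zero. One approach is a Fourier-style unique-continuation argument; another is to exploit positivity of $u^*u$ together with Corollary~\ref{cor:nosubh}. Granted invertibility, set
\[
W(u) := \tau\bigl((\dbar u)\,u^{-1}\bigr),
\]
so $\tau(f) = W(u)$. Two straightforward calculations --- using Leibniz, the identity $\dbar(u^{-1}) = -u^{-1}(\dbar u)u^{-1}$, and cyclicity of $\tau$ --- show that $W(u_1 u_2) = W(u_1) + W(u_2)$ and that $W$ is locally constant on $\operatorname{GL}(A_\theta^\infty)$. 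Hence $W$ descends to a group homomorphism $\pi_0\operatorname{GL}(A_\theta^\infty) \cong K_1(A_\theta) \cong \bZ[U]\oplus \bZ[V] \to \bC$, and is determined by its values on generators: $W(U) = \tau(\pi i\,U\cdot U^{-1}) = \pi i$ and $W(V) = \tau(-\pi\,V\cdot V^{-1}) = -\pi$. For $[u] = m[U] + n[V]$ this gives $\tau(f) = m\pi i - n\pi = \pi i(m+in) \in \pi i(\bZ + i\bZ)$.

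\textbf{Sufficiency.} Assume $\tau(f) = \pi i(m+in)$. The first step is to absorb this trace by substituting $u = U^m V^n\,w$. Using $\dbar(U^m V^n) = \pi i(m+in)U^m V^n$ and the Leibniz rule, the equation $\dbar u = fu$ rearranges to $\dbar w = \tilde f\,w$, where
\[
\tilde f := (U^m V^n)^{-1}\,f\,(U^m V^n) - \pi i(m+in).
\]
Since inner automorphisms preserve $\tau$, $\tau(\tilde f) = 0$, and it suffices to solve $\dbar w = \tilde f\,w$ under this assumption.

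The commutative recipe --- writing $\tilde f = \dbar g$ via Proposition~\ref{prop:imdbar} and taking $w = e^g$ --- breaks down because $\dbar(e^g)$ is given by a Duhamel integral and is not $e^g\,\dbar g$. Instead, I would look for a solution of the affine form $w = 1 + v$ with $v \in \ker\tau$; substituting and taking traces yields the compatibility condition $\tau(\tilde f\,v) = 0$, and applying $G := \dbar^{-1}|_{\ker\tau}$ (well-defined by Proposition~\ref{prop:imdbar} and compact on appropriate Sobolev completions by Theorem~\ref{thm:SobolevEmbedding}) converts the problem into the Fredholm integral equation
\[
(I - G\,\Pi\,L_{\tilde f})\,v = G\,\tilde f,
\]
where $L_{\tilde f}$ is left multiplication and $\Pi(a) := a - \tau(a)$. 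The operator $I - G\,\Pi\,L_{\tilde f}$ is a compact perturbation of the identity, hence Fredholm of index zero. When it is invertible, I solve directly for $v$ and obtain $w = 1 + v \ne 0$. When it has a nontrivial kernel, I would combine the Fredholm alternative with the scalar constraint $\tau(\tilde f\,v) = 0$ --- if necessary invoking a Leray--Schauder argument in the spirit of Theorem~\ref{thm:nonlin1} --- to produce a genuine solution of $\dbar w = \tilde f\,w$, not necessarily of the form $1+v$. A final elliptic-regularity iteration, exactly as in the closing paragraphs of the proof of Theorem~\ref{thm:nonlin1}, promotes the resulting solution from $\cA_\theta$ (or a Sobolev space) to $A_\theta^\infty$.

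\textbf{Main obstacle.} The serious analytic difficulty lies in the trace-zero case of sufficiency. The clean exponential construction is unavailable because of noncommutativity; the Fredholm reformulation handles the generic case immediately, but the degenerate case --- in which $I - G\,\Pi\,L_{\tilde f}$ has a nontrivial kernel whose elements fail the scalar compatibility $\tau(\tilde f\,v)=0$ --- forces one to work harder, which is exactly the role played by the more delicate arguments of Polishchuk \cite{MR2247860}.
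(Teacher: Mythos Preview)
The paper does not prove this theorem at all; it is simply attributed to Polishchuk \cite{MR2247860} and quoted for later use. So there is no ``paper's own proof'' to compare against, and your proposal must stand or fall on its own.

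\textbf{A genuine gap in necessity.} Your first step is to show that any nonzero solution $u$ of $\dbar u = fu$ is invertible in $A_\theta$, after which the winding-number functional $W(u)=\tau\bigl((\dbar u)u^{-1}\bigr)$ does the rest. But this invertibility claim is \emph{false}. The paper itself exhibits counterexamples: in the proof of Proposition~\ref{prop:mero} one chooses $f$ so that the associated holomorphic connection $\nbar=\dbar+f$ is reducible, with a space of holomorphic sections of dimension $>1$. If any solution $u$ were invertible and $v$ were a second, linearly independent solution, then $u^{-1}v$ would be holomorphic (the computation in Definition~\ref{def:mero}), hence constant by Proposition~\ref{prop:imdbar}, contradicting linear independence. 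The paper states this explicitly (``$u$ cannot be invertible'') and cites \cite[Lemma~3.14]{MR2247860}. Neither of your suggested routes---a Fourier unique-continuation argument, or applying Corollary~\ref{cor:nosubh} to $u^*u$---can succeed, because the conclusion they aim at is wrong. (The subharmonicity calculation $\Delta(w^*w)\ge 0$ works for $\dbar w=0$, not for $\dbar u=fu$ with $f\ne 0$.) Since $W(u)$ requires $u^{-1}$, the entire necessity argument collapses for these $f$. Polishchuk's actual proof uses the structure theory of holomorphic bundles on $A_\theta$ rather than a pointwise winding invariant.

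\textbf{Sufficiency.} Your reduction to $\tau(\tilde f)=0$ via $u=U^mV^n w$ is correct and standard. The Fredholm reformulation is reasonable, but as you yourself note, solving $(I-G\Pi L_{\tilde f})v=G\tilde f$ does not automatically enforce the compatibility $\tau(\tilde f v)=0$, and you give no mechanism for handling the degenerate case. Invoking ``a Leray--Schauder argument in the spirit of Theorem~\ref{thm:nonlin1}'' is not a proof: that theorem relies on an \emph{a priori} bound from the Maximum Principle, and no analogue is in sight here. So sufficiency remains a sketch with its hardest step unaddressed, as you acknowledge in your final paragraph.
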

[Comment: Polishchuk and Schwarz in
\cite{MR1977884,MR2247860} use a slightly different convention.
They take $\dbar$ to be $(x+iy)\delta_1 + \delta_2$, with $y<0$;
when $x=0$ and $y=-1$, this is what we have here, up to a constant
factor of $-2i$. This constant explains why the result looks
different. With our convention, $u=U^mV^n$ solves $\dbar u = fu$
with $f = \pi i(m+i n)$.]

The relevance of this result concerns the theory of noncommutative
\emph{meromorphic} functions. While a compact
complex manifold admits no nonconstant global
holomorphic functions, it can admit nonconstant meromorphic
functions, such as (in the case of an elliptic curve) elliptic
functions like the Weierstra{\ss} $\wp$ function.
There are two ways we can view meromorphic functions on a Riemann
surface $M$. On the one hand, they can be considered as ratios of
holomorphic sections of holomorphic line bundles $\cL$ of $M$. On the
other hand, they can be considered as formal quotients of functions
that satisfy the Cauchy-Riemann equation.

These points of view, applied to a noncommutative torus, 
are equivalent via the following reasoning. A
holomorphic vector bundle is defined via its module of (smooth)
sections, which is a finitely generated projective (right)
$A_\theta^\infty$-module. This module must be equipped with an
operator $\nbar$ satisfying the basic axiom
\[
\nbar(s\cdot a) = \nbar(s)\cdot a + s\cdot \dbar(a)\,.
\]
If we assume the module is $A_\theta^\infty$ itself (i.e., the vector
bundle is of ``dimension $1$,'' i.e., is a line bundle), then this
operator is determined by $f=\nbar(1)$, in that for any $s$,
\[
\nbar(s) = \nbar(1\cdot s)
= f\cdot s + 1\,\dbar(s) = \dbar(s)+ f s\, .
\]
A ``holomorphic section'' of the bundle is then a solution $s$ of
$\dbar(s)+ f s = 0$.

On the other hand, the natural definition of ``meromorphic functions''
is the following.
\begin{definition}
\label{def:mero}
A \emph{meromorphic function on the noncommutative torus} $A_\theta$
is a formal quotient $u^{-1}v$, with $u,\,v\in \dom(\dbar) \subset
A_\theta$, satisfying the Cauchy-Riemann equation (in the
sense to be made precise below). Here we don't want to require
that $u$ be invertible in $A_\theta$ (otherwise $u^{-1}v$ would be
holomorphic, hence constant), so we simply want $u$ to be ``regular''
(in the sense of not being either a left or right zero divisor), and
the inverse is to be interpreted in a formal sense 
(or in the maximal ring of quotients \cite{MR649748}, the
algebra of unbounded operators affiliated to
the hyperfinite II$_1$ factor obtained by completing $A_\theta$ 
in its trace representation).  Then the condition that $u^{-1}v$
be meromorphic is that
\[
0 = \dbar(u^{-1}v) = \dbar(u^{-1})v + u^{-1}\dbar v
=-u^{-1}\dbar(u)u^{-1}v + u^{-1}\dbar v \, ,
\]
or (via multiplication by $u$ on the left) that
\begin{equation}
\label{eq:holopair}
\dbar v = f v,\quad \dbar u = f u \, ,
\end{equation}
which says precisely that our meromorphic function is a quotient
of two holomorphic sections of a holomorphic line bundle with
$\nbar =\dbar+f$. In the other direction, if $u$ and $v$ satisfy
\eqref{eq:holopair} and $u$ is regular, so that the formal expression
$u^{-1}v$ makes sense, then we formally have
\[
\begin{aligned}
\dbar(u^{-1}v) &= \dbar(u^{-1})v + u^{-1}\dbar v =
-u^{-1}\dbar(u)u^{-1}v + u^{-1}\dbar v \\
& = -u^{-1}f u u^{-1}v + u^{-1}fv = -u^{-1}f v + u^{-1}f v = 0\, ,
\end{aligned}
\]
and $u^{-1}v$ is meromorphic.
\end{definition}

In accordance with the classical existence theorem of Weierstra{\ss} 
for elliptic functions, we have:
\begin{proposition}
\label{prop:mero}
There exist nonconstant meromorphic functions on the noncommutative
torus $A_\theta$, in the sense of Definition \textup{\ref{def:mero}}.
\end{proposition}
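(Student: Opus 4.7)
The plan is to exhibit explicit $u,v\in\dom(\dbar)\subseteq A_\theta$ together with an $f\in A_\theta$ satisfying $\dbar u=fu$ and $\dbar v=fv$, with $u$ regular and $v$ not a scalar multiple of $u$.  By Definition~\ref{def:mero} this directly produces a nonconstant meromorphic function $u^{-1}v$.  Geometrically, this is the noncommutative analogue of the classical Weierstra{\ss} construction: an elliptic function on $\bC/(\bZ+i\bZ)$ is a ratio of two linearly independent holomorphic sections of a line bundle of degree $\ge 2$, so the task is to produce the noncommutative analogue of such a line bundle with two linearly independent holomorphic sections.

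My first move is to apply Theorem~\ref{thm:Cousin}: choose $f\in A_\theta^\infty$ with $\tau(f)\in\pi i(\bZ+i\bZ)$ so as to guarantee at least one nontrivial solution $u$ of $\dbar u=fu$.  The next step --- producing a second, linearly independent solution $v$ --- is where the real work lies.  The natural framework is the noncommutative theta-function formalism of Polishchuk--Schwarz \cite{MR1977884,MR2054986}, in which a holomorphic structure of ``degree $d\ge 2$'' on a suitable projective $A_\theta^\infty$-module carries a $d$-dimensional space of holomorphic sections.  Two linearly independent such sections of a degree-$2$ structure, transferred to $A_\theta$ either directly or via Morita equivalence between $A_\theta$ and some $A_{\theta'}$, give the desired pair $(u,v)$.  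Regularity of $u$ would then follow from the fact that $A_\theta$ embeds densely in its GNS II$_1$ factor $M_\tau$ under the unique trace: any nonzero element of $A_\theta$ has trivial kernel as an affiliated operator and hence an inverse in the algebra of unbounded operators affiliated to $M_\tau$, so $u^{-1}v$ is well-defined in the maximal ring of quotients already referenced in Definition~\ref{def:mero}.  Linear independence of $u$ and $v$ over $\bC$ then forces $u^{-1}v\notin\bC\cdot 1$.

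The main obstacle is producing the second solution.  A naive Fourier-coefficient analysis, writing $w=\sum c_{m,n}U^mV^n$, turns $\dbar w=fw$ into the recursion
\[
\bigl[\pi i(p+iq)-f_{0,0}\bigr]\,c_{p,q}
=\sum_{(k,l)\ne(0,0)} f_{k,l}\,c_{p-k,q-l}\,e^{-2\pi i(p-k)l\theta},
\]
which is governed by a single ``resonant'' free coefficient at the unique index $(m_0,n_0)$ with $\pi i(m_0+in_0)=\tau(f)$; on the trivial module $A_\theta^\infty$ the solution space is therefore typically at most one-dimensional, reflecting the topological fact that any line bundle realizable on the trivial module has vanishing first Chern class and hence $h^0\le 1$.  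Getting past this obstruction --- genuinely producing two linearly independent solutions --- is the crux, and I would expect to do it by working in a nontrivial Heisenberg bimodule of positive degree and transporting the sections back to $A_\theta$ via Morita equivalence; once the pair $(u,v)$ is in hand, the remaining verifications (regularity of $u$ and nonconstancy of $u^{-1}v$) reduce to the routine considerations described above.
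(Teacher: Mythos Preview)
Your proposal has a genuine error in the regularity step, and the mechanism for producing the second section is only a sketch whose key move does not work as stated.

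First, the regularity claim is false. You assert that ``any nonzero element of $A_\theta$ has trivial kernel as an affiliated operator,'' but $A_\theta$ contains proper projections (e.g., Rieffel projections), and any proper projection $e$ satisfies $e(1-e)=0$, so $e$ is a zero divisor. Regularity of $u$ is therefore \emph{not} automatic and must be verified for the particular $u$ constructed. The paper singles this out as needing a separate argument: it checks regularity by appealing to the explicit theta-function formulas in \cite[Proposition~2.5]{MR1977884}, which realize the sections as real-analytic Schwartz functions on $\bR$, and then invokes \cite{MR649748} to see that the left and right support projections of the resulting $u$ are both $1$.

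Second, your plan to work in a Heisenberg bimodule of degree $\ge 2$ and then ``transport the sections back to $A_\theta$ via Morita equivalence'' is not actually carried out, and the transport step is problematic: Morita equivalence relates module categories, and there is no evident mechanism by which holomorphic sections of a nontrivial projective module become elements of $A_\theta$ itself (which is what Definition~\ref{def:mero} requires). The paper's route is different and more direct: it uses the fact that the \emph{trivial} module $A_\theta^\infty$ can carry a \emph{reducible} holomorphic connection. One picks a proper projection $e\in A_\theta^\infty$ (with $\tau(e)=m+n\theta$ and $n$ coprime to $m$ and $1-m$), splits $A_\theta^\infty = eA_\theta^\infty \oplus (1-e)A_\theta^\infty$, and places a standard holomorphic connection on each summand. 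The direct sum is a connection $\nbar = \dbar + f$ on $A_\theta^\infty$ with a $2$-dimensional space of holomorphic sections, one from each summand, and these sections are already elements of $A_\theta^\infty$ --- no Morita transfer is needed. Your Fourier-coefficient recursion correctly explains why a \emph{generic} (irreducible) connection on the trivial module has at most a one-dimensional solution space; the paper's point is precisely to choose a non-generic, reducible one.
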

\begin{proof}
This follows immediately from the discussion in \cite[\S3]{MR2247860},
which shows that there are choices for $f$ for which the holomorphic
connection $\nbar$ is reducible, with a space of holomorphic
sections of dimension bigger than $1$, and thus there are solutions of
\eqref{eq:holopair} with $u$ and $v$ not linearly dependent. Note
that if this is the case, $u$ cannot be invertible (\cite[Lemma
3.14]{MR2247860}---we also know this independently from Proposition
\ref{prop:imdbar}). But we do require $u$ to be regular, so we need
to check that this can be achieved. For example, suppose $e$ is a 
proper projection in $A_\theta^\infty$ (``proper'' means $0 < \tau(e)
< 1$) of trace $m+n\theta$ with $n$ relatively prime to both $m$ and $1-m$. 
The trivial rank-one right $A_\theta^\infty$ module splits
as $e A_\theta^\infty \oplus (1-e) A_\theta^\infty$, and we can
arrange to choose a holomorphic connection on $A_\theta^\infty$ 
that is reducible in a way compatible with this splitting, so that
there are $1$-dimensional spaces of holomorphic sections on each of
$e A_\theta^\infty$ and $(1-e) A_\theta^\infty$.  By the explicit
formulas in \cite[Proposition 2.5]{MR1977884}, these come
from real-analytic functions in $\cS(\bR)$, and so it's evident
that the $u$ that results from putting these together is regular,
as by \cite{MR649748}, it's enough to observe that its left and
right support projections are equal to $1$.
\end{proof}

On the other hand, there is also a non-existence result for
meromorphic functions on the [classical] torus: no such nonconstant
function exists 
with only a single simple pole \cite[Corollary to Theorem 4, p.\ 
271]{MR510197}. We can find an analogue of this in the
noncommutative situation also. To explain it, first note that
in the sense of distributions on the complex plane, 
$\dbar\left(\frac{1}{z}\right)$ is not
zero (if it were, $\frac{1}{z}$ would have a removable singularity,
by elliptic regularity), but rather is equal to $\pi\,\delta$,
where $\delta$ is the Dirac $\delta$-distribution at $0$.
Suppose there were a meromorphic function $f$ on $\bT^2=\bC/(\bZ + i\bZ)$
with at worst one simple pole and no other poles. Then
$f$ would be locally integrable and, after translation to
move the pole to $0$, would define a distribution
on $\bT^2$ with $\dbar(f)$ a multiple of $\delta$. Thus the Fourier
series of $\dbar(f)$ would be a multiple of the Fourier series of
$\delta$, which is $\sum_{m,n}U^mV^n$. And in fact Fourier
analysis gives
another proof of the nonexistence theorem not using residue calculus.
Suppose $f$ were nonconstant. Since a compact complex manifold
admits no nonconstant holomorphic functions, $f$ cannot be
holomorphic, which means that $\dbar f$ 
must  be non-zero in the sense of distributions. Since $\dbar(f)$ is a
multiple of $\sum_{m,n}U^mV^n$, the proportionality constant, which is
also the $(0,0)$ Fourier coefficient
of $\dbar f$, must be non-zero. But this is impossible since
the Fourier series of any distribution in the image on $\dbar$
must have zero constant term.  The noncommutative analogue of all this
is the following:
\begin{proposition}
\label{prop:nomeroonepole}
Let $f$ be a distribution in the dual of $A_\theta^\infty$.
{\lp}The distributions consist of formal Fourier series
$\sum_{m,n}c_{m,n}\,U^mV^n$ with $\{c_{m,n}\}$ of tempered growth.{\rp}
Suppose $\dbar f$ is a multiple of $\sum_{m,n}U^mV^n$. 
Then $f$ is a constant.
\end{proposition}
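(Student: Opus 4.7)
The plan is to prove this by a direct Fourier coefficient computation, mimicking the classical argument sketched just before the statement. Every distribution $f$ on $A_\theta^\infty$ is specified by its tempered sequence of Fourier coefficients $\{c_{m,n}\}$, so the hypothesis and conclusion both translate into statements about these coefficients.

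First I would compute $\dbar f$ on the Fourier side. Exactly as in the proof of Proposition \ref{prop:imdbar}, one has
\[
\dbar f = \pi i \sum_{m,n} (m+in)\, c_{m,n}\, U^m V^n
\]
(this formula extends from $A_\theta^\infty$ to the distribution dual by continuity/duality, since $\dbar$ acts as a multiplier on Fourier coefficients). The hypothesis says this formal series equals $\lambda \sum_{m,n} U^m V^n$ for some scalar $\lambda$, i.e.\ $\pi i (m+in)\, c_{m,n} = \lambda$ for all $(m,n)\in \bZ^2$.

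Next I would extract the conclusion in two steps. Specializing to $(m,n)=(0,0)$ forces $\lambda = 0$, since the factor $m+in$ vanishes there; this is precisely the noncommutative analogue of the classical observation that the constant Fourier coefficient of anything in the image of $\dbar$ must vanish. With $\lambda=0$, the remaining equations read $(m+in)\, c_{m,n} = 0$ for all $(m,n)$, and since $m+in \ne 0$ whenever $(m,n)\ne(0,0)$, we conclude $c_{m,n}=0$ for all $(m,n)\ne(0,0)$. Thus $f = c_{0,0}\cdot 1$ is a scalar.

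There is essentially no obstacle: the argument is just the Fourier-side restatement, and the only thing one might want to be careful about is justifying that the formula for $\dbar$ on formal Fourier series is valid on the distribution dual (which is standard, as pairing against a test function in $A_\theta^\infty$ reduces everything to finite sums, or alternatively one defines $\dbar$ on distributions by duality via Lemma \ref{lem:intbyparts}). No appeal to Wiener's Theorem, the Maximum Principle, or any of the nonlinear machinery is needed.
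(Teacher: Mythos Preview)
Your proof is correct and follows essentially the same approach as the paper's own proof: compute $\dbar f$ termwise, observe that the $(0,0)$ coefficient forces the multiple to be zero, and then conclude from the distributional kernel of $\dbar$ that $f$ is constant.
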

\begin{proof}
This follows exactly the lines as the argument above
for the classical theorem. If $\dbar f$ has formal Fourier
expansion $c \sum_{m,n}U^mV^n$, then the $(m,n)$ coefficient, $c$,
must be divisible by $m+in$ for all $(m,n)$. Because
of the $(0,0)$ coefficient, this is only possible if $c=0$. But if $c=0$,
then $f$ is in the distributional kernel of $\dbar$, which forces
all the Fourier coefficients of $f$ to vanish except for the constant term.
\end{proof}

In fact, essentially the same proof proves a slightly more general
statement, which in the classical case is equivalent to \cite[Theorem 4, p.\ 
271]{MR510197}. For the analysis above shows that the ``sum of the
residues'' of a meromorphic function $f$ on $\bT^2$, when the function is
considered as a distribution\footnote{This requires a comment. A
meromorphic function with simple poles is locally integrable, thus
defines a distribution in the obvious way. A meromorphic function with
higher-order poles is not locally integrable, but can be made into a
distribution of ``principal value integral'' type. This distribution
is not a measure.}, is precisely the constant term in the
Fourier series of $\dbar f$, up to a factor of $\pi$. The analogue of
the ``sum of the residues'' theorem in the noncommutative world is: 
\begin{proposition}
\label{prop:sumofres}
Let $f$ be a distribution in the dual of $A_\theta^\infty$.
Then the constant term in the {\lp}formal{\rp} Fourier series
of\/ $\dbar f$ is zero.
\end{proposition}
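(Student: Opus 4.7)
The plan is to reduce the proposition to the trivial observation that $\dbar$ annihilates the constant $1$, after making sense of how $\dbar$ acts on the dual space $(A_\theta^\infty)^*$. The only bookkeeping required is to match the ``formal Fourier series'' picture of a distribution with the duality picture; once done, everything collapses.

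First I would extend $\dbar$ to distributions by the standard integration-by-parts convention: since by Lemma \ref{lem:intbyparts} one has $\tau((\delta_j g)\,a) = -\tau(g\,\delta_j(a))$ for $g,a \in A_\theta^\infty$, it is natural to define the action of $\delta_j$, and hence of $\dbar = \frac12(\delta_1 + i\delta_2)$, on $f \in (A_\theta^\infty)^*$ by
\[
(\delta_j f)(a) = -f(\delta_j a), \qquad (\dbar f)(a) = -f(\dbar a).
\]
A direct check using the basic identity $\tau(U^p V^q\, V^{-n} U^{-m}) = \delta_{p,m}\delta_{q,n}$ shows that this duality definition is consistent with the formal Fourier calculus: if $f$ has (tempered) formal Fourier expansion $\sum_{m,n} c_{m,n}\,U^m V^n$, then the formal Fourier expansion of $\dbar f$ is $\pi i \sum_{m,n}(m+in)\,c_{m,n}\,U^m V^n$, matching the elementary eigenvalue formula $\dbar(U^m V^n) = \pi i(m+in)\,U^m V^n$ on $A_\theta^\infty$.

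Now the constant Fourier coefficient of any distribution $g$ is just $g(1)$, since pairing with $1$ extracts the $(0,0)$ mode. Applied to $g = \dbar f$ this gives
\[
c_{0,0}(\dbar f) = (\dbar f)(1) = -f(\dbar 1) = -f(0) = 0,
\]
which is the assertion. Equivalently, in the formal Fourier picture the $(0,0)$ coefficient of $\dbar f$ is $\pi i(0+0\cdot i)\,c_{0,0} = 0$. There is no real obstacle: the entire argument rests on $\dbar(1) = 0$ together with the formal Fourier/duality bookkeeping that is already implicit in the statement of the proposition.
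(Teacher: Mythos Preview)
Your argument is correct and is essentially the same as the paper's proof: the paper simply remarks ``Essentially the same as before,'' referring to the Fourier-coefficient observation in the preceding proposition that the $(m,n)$ coefficient of $\dbar f$ carries a factor $m+in$, hence vanishes at $(0,0)$. Your duality formulation $(\dbar f)(1)=-f(\dbar 1)=0$ is just a clean restatement of this same fact.
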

\begin{proof}
Essentially the same as before.
\end{proof}

The connection with the main subject of this paper is of course that
``meromorphic functions'' $w$ as studied in this section are
``singular'' solutions of 
Laplace's equation $\Delta w=0$, since $\Delta = 4\, \partial\,
\dbar$. More precisely, ``singular solution'' means classically that
as a distribution, 
$\Delta w$ is not necessarily $0$, but has countable support. In the
noncommutative setting, we do not have a notion of support for a
distribution, but the same basic idea applies.

\renewcommand{\MR}{\relax}
\bibliographystyle{amsplain}
\bibliography{Laplace}
\end{document}